\theoremstyle{plain}
\newtheorem{theorem}{Theorem}[section]
\newtheorem{corollary}[theorem]{Corollary}
\newtheorem{question}[theorem]{Question}
\newtheorem{prop}[theorem]{Proposition}
\newtheorem{lemma}[theorem]{Lemma}
\theoremstyle{definition}
\newtheorem{definition}[theorem]{Definition}
\newtheorem{exmp}[theorem]{Example}
\theoremstyle{remark}
\newtheorem{remark}[theorem]{Remark}
\newcommand{\QQ} {\mathbb Q}
\def\sO{\mathscr{O}}
\def\sU{\mathscr{U}}
\def\sB{\mathscr{B}}
\def\sN{\mathscr{N}}
\def\u#1{\underline{#1}}
\def\sM{\mathscr{M}}
\def\sA{\mathscr{A}}
\def\sE{\mathscr{E}}
\def\sI{\mathscr{I}}
\def\cA{\mathcal{A}}
\def\cB{\mathcal{B}}
\def\cC{\mathcal{C}}
\def\cK{\mathcal{K}}
\def\cB{\mathcal{B}}
\def\cV{\mathcal{V}}
\def\cI{\mathcal{I}}
\def\cU{\mathcal{U}}
\def\H{\text{H}}
\def\ker{\text{ker}}
\def\DGA{\text{DGA}}
\newcommand{\beq} {\begin{equation}}
\newcommand{\eeq} {\end{equation}}
\renewcommand{\Im} {\operatorname{Im}}
\newcolumntype{x}[1]{>{\centering\arraybackslash}p{#1}}
\author{Soumyadip Thandar}
\address{School of Mathematics, Tata Institute of Fundamental Research, Mumbai, India}
\email{stsoumyadip@gmail.com}
\date{\today}
\begin{document}
\begin{center}
{ 
       {\Large \textbf { \sc  $C_{pq}$-Injective Diagrams and a Combination Theorem for Minimal Models   
          }
       }
\\

\medskip

 {\sc Soumyadip Thandar}\\
{\footnotesize School of Mathematics, Tata Institute of Fundamental Research, Mumbai, India}\\

{\footnotesize e-mail: {\it stsoumyadip@gmail.com}}\\
}
\end{center}

\medskip

\begin{center}
  {\sc Abstract}\\
\end{center}
  
We study diagrams of commutative differential graded algebras (DGAs) over the orbit category $\sO_G$ in the context of equivariant rational homotopy theory. For $G = C_{pq}$ with $p, q$ distinct primes, we give necessary conditions for injectivity. We prove a combination-type result: the equivariant wedge of injective diagrams over $\mathcal{O}_{C_p}$ and $\mathcal{O}_{C_q}$ with retract structure maps yields an injective diagram over $\mathcal{O}_{C_{pq}}$ with a level-wise minimal model. As an application, we construct examples of $C_{pq}$-formal spaces.

\hrulefill

{\small \textbf{Keywords:} System of $\DGA$s, Minimal model, Equivariantly formal, Unstable equivariant rational homotopy theory.}

\indent {\small {\bf 2020 Mathematics Subject Classification:} {55P91, 55P62, 16E45, 18G10.}}

\section{Introduction}\label{section:intro}

Rational homotopy theory provides an algebraic approach to studying the homotopy types of simply connected spaces using rational data. Two such spaces are said to have the same \emph{rational homotopy type} if they are connected by a zigzag of maps, each inducing an isomorphism in rational cohomology. Quillen and Sullivan introduced algebraic models for this purpose: Quillen via differential graded Lie algebras~\cite{QR} and Sullivan via commutative differential graded algebras, now known as Sullivan models~\cite{Sul77}. A Quillen equivalence between the homotopy categories of nilpotent spaces and cohomologically $1$-connected $\DGA$s is established in~\cite{BOUSGUG}, providing a formal justification for studying rational homotopy types through algebraic models. This framework has been developed and explored in various contexts, including~\cite{DGMS, HARST, SY82, LUPT, HIRO, SS12}.

We study topological spaces equipped with a $ G$-action, focusing on their \emph{equivariant rational homotopy type}. Specifically, we consider $G$-spaces $X$ such that the fixed point sets, denoted as $X^H$, is simply connected for every subgroup $H \leq G$. The foundational work of Elmendorf~\cite{ELM83} identifies the homotopy category of $G$-spaces with the category of contravariant functors from the orbit category $\sO_G$ (see \Cref{defn:orbit}) to spaces, enabling an algebraic approach to equivariant topology. In this framework, the Bredon cohomology of a $G$-space with constant coefficients corresponds to a diagram of graded algebras over $\sO_G$, and injective diagrams retain essential topological information. Algebraic models for classifying equivariant rational homotopy types were developed for finite $G$ and for $G = S^1$ by Triantafillou~\cite{GT82} and Scull~\cite{LS02}, respectively. These models take the form of functors from $\sO_G$ to cohomologically $1$-connected DGAs, referred to as \emph{systems of DGAs} when injective. Two $G$-spaces are said to share the same \emph{$G$-rational homotopy type} if their fixed point sets have isomorphic rational cohomology along a zigzag of $G$-maps. It is shown in~\cite{GT82,LS02} that this type is determined by the isomorphism class of their minimal system of DGAs. More recently, Santhanam and Thandar~\cite{santhanam2023equivariant} analyzed this classification for $G = C_p$ using cohomology diagrams to capture equivariant rational data.

The equivariant rational homotopy type of a diagram of graded algebras $\cA^\ast$ over the orbit category $\sO_G$ that is viewed as a diagram of $\DGA$s with zero differential, is determined by its minimal model (see \Cref{def:minimal} and \Cref{thm:existance of minimal}). That is, a minimal system of $\DGA$s $\sM$ over $\sO_G$ together with a morphism $\rho\colon \sM \to \cA^\ast$ inducing an isomorphism in cohomology (such maps are often called \emph{quasi-isomorphisms}). However, the cohomology diagram associated to a given $G$-space is not always injective, as illustrated in \Cref{exmp:noninjective}. In such cases, one must consider the \emph{injective envelope} of the diagram of $\DGA$s (see \Cref{thm:fineee}), whose differential is generally nonzero. This leads to more intricate computations compared to the case of diagrams with zero differential.

In \cite{santhanam2023equivariant}, the authors provide a simple criterion to check whether a diagram of $\DGA$s over the orbit category of $C_p$ (a cyclic group of prime order $p$) is injective (see \Cref{prop:inj C_p}).

Motivated by this result, we pose the following question:

\begin{question}\label{question:1}
\emph{Is there an analogous criterion that ensures injectivity for diagrams of $\DGA$s over the orbit category of $C_{pq}$, where $p$ and $q$ are distinct primes?}
\end{question}

Understanding injective objects in this setting is essential as they serve as targets for minimal models and play a central role in classifying equivariant rational homotopy types. We address the question posed above in \Cref{section:injecti}.

In contrast to the $C_p$-case, where surjectivity of the structure maps is sufficient to guarantee injectivity of a diagram of $\DGA$s over $\sO_{C_p}$ (see statement of \Cref{prop:inj C_p}), we show that this is not the case for $C_{pq}$, where $p$ and $q$ are distinct primes. In particular, we prove that if a diagram over $\sO_{C_{pq}}$ is injective, then all of its structure maps must be surjective. However, the converse does not generally hold (see \Cref{exmp: notinjectiveexample}). To address this, we provide a condition: diagrams over $\sO_{C_{pq}}$ that satisfy \emph{Property I} (defined in \Cref{defn: property I}) are always injective (see \Cref{thm: injective C_pq}). 

A minimal system of $\DGA$s over $\sO_G$ that determines the $G$-rational homotopy type of a $G$-simply connected space is built step by step using \emph{elementary extensions} (see \Cref{def:ele}). These are the equivariant analogue of Hirsch extensions, which are used in the non-equivariant case to construct minimal Sullivan algebras (see \cite[Chapter~16.2]{PGJM}). In a Hirsch extension, one adds new generators in a fixed degree to kill certain cohomology classes. In the equivariant setting, however, the construction of an elementary extension requires taking an \emph{injective resolution} of a diagram of vector spaces (also called \emph{associated system of vector spaces} \Cref{def:associ:vect}) over the orbit category $\sO_G$, and as a result, the generators added in the $n$-th stage can have degrees greater than $n$.


 The construction of an elementary extension of a system of $\DGA$s $\cU$ over $\sO_G$, depends on the following data; a diagram of vector spaces $\underbar{V}$ over $\sO_G$ of degree $n$ and an element $[\alpha]\in H^n(\cU;\underbar{V})$, and the extension is denoted by $\cU^{\alpha}(\underbar{V})$. 
  Any two non-isomorphic minimal systems of $\DGA$s, with an isomorphic diagram of cohomology algebras, differ at some $n$-th stage.

Moreover, we observe that at each stage of the construction of a minimal system, if the associated diagram of vector spaces (\Cref{def:associ:vect}) involved in the elementary extension is injective, then 
the computations simplify. We therefore, consider the following question:

\begin{question}\label{question: 2}
   What conditions on an injective cohomology diagram of graded algebras ensure that the associated diagram of vector spaces (\Cref{def:associ:vect}) added for each elementary extension is injective?
    
\end{question}

In \cite{santhanam2023equivariant} the authors study \Cref{question: 2} for the case when the group $G$ is the cyclic group $C_p$, where $p$ is a prime. They show that if a cohomology diagram $\cA^\ast$ over the orbit category $\sO_{C_p}$ has structure maps that admit a retraction (see \Cref{defn: retract}), and if $\sM$ is the minimal model of $\cA^\ast$, then the associated diagram of vector spaces at each step of the construction of $\sM$ is injective. As a consequence, the equivariant minimal model $\sM$ is \emph{level-wise minimal}, i.e., for each $H \leq G$, the component $\sM(G/H)$ is a minimal model of $\cA^\ast(G/H)$, thereby reducing the problem to the non-equivariant case (see \Cref{Z_P injimpliesminimal}).

In this article, we prove a \emph{combination-type theorem} for the case $G = C_{pq}$, the cyclic group of order $pq$ with $p$ and $q$ distinct primes. We begin with two cohomology diagrams, $\cA_1$ over $\sO_{C_p}$ and $\cA_2$ over $\sO_{C_q}$, each having structure maps that are retracts. We consider their equivariant wedge product (see \Cref{defn:wedgeequivar}), which gives rise to a diagram over $\sO_{C_{pq}}$. We show that this resulting diagram of $\DGA$s is injective (see \Cref{prop: combination injective}) and that the associated diagram of vector spaces added at each stage of the minimal model construction remains injective. In particular, the minimal model of this diagram is level-wise minimal (see \Cref{thm:combmain}). This provides a structured method for constructing new equivariant minimal models from existing ones that preserve desirable properties. 
We also prove \Cref{cor:main} that identifies a class of $C_{pq}$-spaces whose equivariant minimal models are level-wise minimal. That is, at each orbit level, the model coincides with the non-equivariant minimal model. These examples show how, in the absence of nontrivial injective resolutions, the equivariant structure simplifies significantly.

Recall that a cohomologically $1$-connected $\DGA$ is said to be \emph{formal} if its minimal model is weakly equivalent to the minimal model of its cohomology algebra. A simply connected topological space is called \emph{formal} if its minimal Sullivan model is formal. For such spaces, the rational homotopy groups can be computed directly from the cohomology algebra using the rational Postnikov tower~\cite[Theorem 3.3]{DGMS}. Many authors have extensively studied formality in various contexts, including \cite{DGMS, HARST, FELIXHALPFORMAL, FormalMARTIN, LUPOPERAFORMAl} and references therein.

A system of $\DGA$s over the orbit category of a group $G$ is said to be \emph{equivariantly formal} if its minimal model is weakly equivalent (see \Cref{def:weakyeqivalent}) to the injective envelope of its cohomology diagram (see~\cite{FINE, santhanam2023equivariant} for examples). A $G$-space is said to be equivariantly formal if the associated minimal system of $\DGA$s is equivariantly formal.

With this setup, we pose the following questions:

\begin{question}\label{qstn:3}
Let $X$ and $Y$ be formal $C_p$- and $C_q$-spaces, respectively, where $p$ and $q$ are distinct primes. Is the wedge $X \vee Y$ also formal as a $C_{pq}$-space? (See \Cref{defn: equiwedgespace}.)
\end{question}

We conclude this article by addressing \Cref{qstn:3}. In \Cref{thm:main2}, we provide a positive answer, establishing a broad class of $C_{pq}$-formal spaces. We conclude with a concrete example illustrating this result.

\textbf{Overview.} In \Cref{secion: background}, we provide necessary definitions and review known results in both the equivariant and non-equivariant settings. In \Cref{section:injecti}, we introduce \emph{Property~I} and prove that a diagram of $\DGA$s over $\sO_{C_{pq}}$ satisfying this property is injective (see \Cref{thm: injective C_pq}). We also give examples and show that restricting a $C_{pq}$-action on a space to a subgroup of order $p$ or $q$ yields injective cohomology diagrams in $Vec^*_{C_p}$ and $Vec^*_{C_q}$, respectively. In \Cref{section: combnation theorem}, we establish the combination theorem for minimal models (see \Cref{thm:combmain}) as outlined in \Cref{section:intro}. Finally, we present a result that provides a class of $C_{pq}$-formal spaces (see \Cref{exmp: mainnnn}), illustrating how formality behaves under such equivariant constructions (\Cref{thm:main2}).

%

    \section{Background}\label{secion: background}
We restrict ourselves to finite groups and $\DGA$s over $\QQ$ throughout. We start with some definitions.

\begin{definition}\label{defn:inj object}(Injective object in a category)
    
An object $I$ in a category $\mathcal{C}$ is said to be \textit{injective} if for every injective morphism $f:X\to Y$ and every morphism $g:X\to I$

 \begin{figure}[h]
     \centerline{
\xymatrix{
X \ar[d]_g\ar[r]^f& Y\ar@{-->}[dl]^{h} \\
I } }
\caption{$f$ is injective, $g$ is any morphism}
\label{fig:injec}
\end{figure}

there exists a morphism $h:Y\to I$ such that $h\circ f=g$ (see \Cref{fig:injec}). 
\end{definition}

\begin{definition}\label{defn:orbit}(Category of canonical orbits)
Given a group $G$, the category of canonical orbits is the category whose objects are $G$-sets $G/H$ and morphisms are $G$ maps between them. We denote this category by $\sO_G$. 
\end{definition}

A \emph{diagram of $\DGA$s} is a covariant functor from the orbit category $\sO_G$ to the category of cohomologically $1$-connected differential graded algebras. The category of such diagrams is denoted by $\sO_G[\DGA]$. A \emph{dual rational coefficient system} is a covariant functor from $\sO_G$ to the category of graded rational vector spaces, and the corresponding category is denoted by $Vec^\ast_G$. In contrast, a \emph{rational coefficient system} is a \emph{contravariant} functor from $\sO_G$ to rational vector spaces, with category denoted by $Vec_G$.

A diagram of $\DGA$s is called a \emph{system of $\DGA$s} if, when regarded as an object in $Vec^\ast_G$ (i.e., by forgetting the differential and multiplication), it is injective in that category. This terminology follows \cite{GT82}. The category of such injective diagrams is denoted by $\DGA^{\sO_G}$.


Let $X$ be a $G$-space such that for every subgroup $H \leq G$, the fixed point space $X^H$ is nonempty and simply connected. Then the cohomology diagram of $X$ (with zero differential) is $1$-connected, i.e., $H^1(X^H; \mathbb{Q}) = 0$ for all $H \leq G$. However, such a diagram need not be injective as a dual coefficient system. Nevertheless, every dual rational coefficient system admits an injective envelope.

We now describe \cite[Prop. 7.34]{LS02}, the  embedding of a given coefficient system  $\u M$ into its injective envelope $\cI$.

\begin{definition}\label{equation:24}

We define 
\beq 
V_H:=\bigcap_{H\subset K}\ker\u M(\hat{e}_{H,K}),
\eeq

 where $\hat{e}_{H,K}:G/H\to G/K$ is the projection and $M(\hat{e}_{H,K})$ is the induced structure map on the functor $M$.  Note that  $V_G$ is defined to be $M(G/G)$. Let $\cI=\oplus_H \underline{V}_H$, where 
 \beq
 \underline{V}_H(G/K):=\hom_{\QQ(N(H)/H)}(\QQ(G/H)^K,V_H).
 \eeq

There is an injective morphism $M\to \cI$ extending the natural inclusions of $\bigcap_ {H\subset K}\ker\u M(\hat{e}_{H,K})$.

\end{definition}

\begin{prop}\cite[Section 4]{GT82}\label{sum inj}
    A dual coefficient system $M$ is injective if and only if it is of the form $M = \bigoplus_H \underline{V}_H$ for some collection of $\QQ(N(H)/H)$-modules
$V_H$ and $$\underline{V}_H(G/K)=\hom_{\QQ(N(H)/H)}(\QQ(G/H)^K,V_H).$$
\end{prop}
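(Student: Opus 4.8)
The plan is to prove the two implications separately: the ``if'' direction via an adjunction exhibiting each $\underline V_H$ as a coinduced, hence injective, object, and the ``only if'' direction by combining this with the injective-envelope construction recalled in \Cref{equation:24}.

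\emph{Sufficiency.} Fix a subgroup $H$ and write $W_H:=N(H)/H$; since $G$ is finite, $W_H$ is exactly the automorphism group (indeed the full endomorphism monoid) of the object $G/H$ in $\sO_G$. Evaluation at $G/H$ defines a functor $\mathrm{ev}_{G/H}\colon Vec^\ast_G\to\QQ(W_H)\text{-Mod}$, and it is exact because kernels and cokernels in the functor category $Vec^\ast_G$ are computed objectwise. I would first record the natural bijection $\hom_{\sO_G}(G/K,G/H)\cong (G/H)^K$, equivariant for the $W_H$-action by post-composition, and use it to identify the assignment $V\mapsto\underline V$, with $\underline V(G/K)=\hom_{\QQ(W_H)}(\QQ(G/H)^K,V)$, as the right adjoint of $\mathrm{ev}_{G/H}$ --- this is precisely the pointwise (limit) formula for the right Kan extension of $V$ along the inclusion of the one-object groupoid $W_H$ into $\sO_G$. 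A right adjoint of an exact functor carries injective objects to injective objects; and since $\mathrm{char}\,\QQ=0$, the algebra $\QQ(W_H)$ is semisimple, so \emph{every} $\QQ(W_H)$-module $V_H$ is injective. Hence each $\underline V_H$ is injective in $Vec^\ast_G$, and since $G$ is finite only finitely many $H$ occur, so $\bigoplus_H\underline V_H$ is a finite direct sum of injectives and therefore injective.

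\emph{Necessity.} Let $M$ be injective. Following \Cref{equation:24}, put $V_H:=\bigcap_{H\subsetneq K}\ker M(\hat{e}_{H,K})$ (with $V_G:=M(G/G)$); a short check shows $V_H$ is stable under the action of $\mathrm{Aut}_{\sO_G}(G/H)$ on $M(G/H)$, so it is a $\QQ(W_H)$-module. \Cref{equation:24} provides a monomorphism $\iota\colon M\hookrightarrow\cI:=\bigoplus_H\underline V_H$ realising $\cI$ as the injective envelope of $M$, and the sufficiency part just proved shows $\cI$ is injective (there is no circularity: sufficiency was established independently). Applying the defining lifting property of the injective object $M$ to the monomorphism $\iota$ and to $\mathrm{id}_M$ yields a retraction $r\colon\cI\to M$ with $r\iota=\mathrm{id}_M$, so $\cI\cong\iota(M)\oplus\ker r$. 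Since $\iota$ is an \emph{essential} monomorphism and $\ker r$ is a subobject of $\cI$ meeting $\iota(M)$ only in $0$, we conclude $\ker r=0$; hence $\iota$ is an isomorphism and $M\cong\bigoplus_H\underline V_H$.

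The one step requiring genuine care is the identification of $V\mapsto\underline V$ as the right adjoint of $\mathrm{ev}_{G/H}$: one must track the Weyl-group actions correctly, matching the post-composition action on $\hom_{\sO_G}(G/K,G/H)=(G/H)^K$ against the module structure on $V$, and verify that the unit and counit are natural in $K$. Everything else is formal, given \Cref{equation:24} (the existence and essentiality of the embedding $M\hookrightarrow\cI$) and the semisimplicity of $\QQ(W_H)$.
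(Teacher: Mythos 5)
Your proof is correct and is essentially the argument behind the cited source (the paper itself gives no proof, importing the statement from \cite{GT82}): you identify $V\mapsto\underline{V}_H$ as the right Kan extension (coinduction) along $BW_H\hookrightarrow\sO_G$, right adjoint to the exact evaluation at $G/H$, so that semisimplicity of $\QQ(N(H)/H)$ in characteristic zero makes each $\underline{V}_H$, and hence the finite direct sum, injective; and you obtain the converse by splitting the embedding of \Cref{equation:24}. The only dependency worth flagging is that your necessity step uses essentiality of the embedding $M\hookrightarrow\cI$ (a direct summand of $\bigoplus_H\underline{V}_H$ would not by itself obviously be of the stated form); \Cref{equation:24} as written only records an injective morphism, so you should cite explicitly that \cite{LS02,GT82} construct this as the injective \emph{envelope}, after which your argument that $\ker r=0$ is complete.
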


Given a diagram of $\DGA$s, forgetting the differential will give a dual rational coefficient system whose injective envelope is a diagram of $\DGA$s, with $0$ differential. However, the map into the injective envelope of dual rational coefficient system will not be a quasi-isomorphism in general. %

Fine and Triantafillou \cite{FINE}, prove the existence of \emph{injective envelope} for a diagram of $\DGA$s.

\begin{prop}\cite[Theorem 1]{FINE}\label{thm:fineee}
For a diagram of $\DGA$s $\sA$ over $\sO_G$, where $G$ is finite group, there is an injective system  of $\DGA$s  $\sI(\sA)$, called the injective envelope of $\sA$ along with an inclusion $i:\sA\to \sI(\sA)$ which is a quasi-isomorphism. 
\end{prop}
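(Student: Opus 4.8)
The plan is to realize $\sI(\sA)$ as the colimit of a chain of inclusions of diagrams of $\DGA$s
\[
\sA \;=\; \sB_0 \;\hookrightarrow\; \sB_1 \;\hookrightarrow\; \sB_2 \;\hookrightarrow\; \cdots ,
\]
in which every map is simultaneously a monomorphism and a quasi-isomorphism, and which repairs the underlying dual coefficient system one cohomological degree at a time. Since cohomology commutes with filtered colimits of cochain complexes of $\QQ$-vector spaces, the composite $i\colon\sA\hookrightarrow\sI(\sA):=\colim_k\sB_k$ is automatically an inclusion inducing an isomorphism on cohomology; the whole content is therefore to arrange that $\sI(\sA)$, regarded as an object of $Vec^\ast_G$, is injective, for then by \Cref{sum inj} it is a system of $\DGA$s and we are done.

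To run this I would use the envelope construction recalled in \Cref{equation:24} and \Cref{sum inj}: a graded dual coefficient system is injective exactly when each of its graded pieces is, every coefficient system $\u M$ admits a canonical monomorphism $\u M\hookrightarrow\cI(\u M)$ into an injective one, and this envelope is an \emph{essential} extension. Given $\sB_k$, let $n_k$ be the lowest degree in which its underlying coefficient system fails to be injective; if there is none, stop and take $\sI(\sA)=\sB_k$. The inductive step builds $\sB_{k+1}$ from $\sB_k$ by an \emph{acyclic} extension that fixes degree $n_k$ and leaves all lower degrees untouched. Concretely: choosing vector-space splittings of the essential inclusions $\sB_k^{n_k}(G/K)\hookrightarrow\cI(\sB_k^{n_k})(G/K)$ yields a cokernel coefficient system $\u W$, concentrated in degree $n_k$, together with the blocks $\psi$ and $\chi$ of the structure maps of $\cI(\sB_k^{n_k})$; one then sets
\[
\sB_{k+1}(G/K)\;:=\;\sB_k(G/K)\,\otimes_\QQ\,\Lambda\!\bigl(\,\u W(G/K)\ \oplus\ d\u W(G/K)\,\bigr),
\]
the tensor product (with its usual total differential) with the free graded-commutative $\DGA$ on $\u W$ in degree $n_k$ and a second copy $d\u W$ in degree $n_k+1$, a $\DGA$ which is acyclic. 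The point that does the work is the twist in the structure maps of $\sB_{k+1}$: on the $\sB_k$-factor they are the old ones, but a new generator $w$ is sent not to $1\otimes w$ but to $\psi(w)\otimes1+1\otimes\chi(w)$, with the image of $dw$ then forced by $d$-compatibility. One checks that these extend to $\DGA$-maps compatible with composition in $\sO_G$ — the conditions needed are exactly the functoriality identities $f^{n_k}_{KL}\psi_{HK}+\psi_{KL}\chi_{HK}=\psi_{HL}$ and $\chi_{KL}\chi_{HK}=\chi_{HL}$ satisfied by the blocks of $\cI(\sB_k^{n_k})$ — and that, with this twist, the degree-$n_k$ coefficient system of $\sB_{k+1}$ is isomorphic to $\cI(\sB_k^{n_k})$, hence injective, whereas degrees below $n_k$ and all of cohomology are unchanged.

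The step I expect to be the crux, and the reason the twisted construction is unavoidable, is the following observation: a direct summand of an injective coefficient system is again injective, so if $\sB_k^{n_k}$ is not injective then no extension whose degree-$n_k$ part is $\sB_k^{n_k}\oplus(\text{new generators, as a sub-coefficient-system})$ can be injective either. Hence the inclusion in degree $n_k$ must be a non-split — indeed essential — extension, which is exactly what realizing $\cI(\sB_k^{n_k})$ achieves, and which by multiplicativity necessarily propagates the new generators into higher degrees; the free tensor extension above is the mildest way to accommodate this. Granting the inductive step, each step strictly increases the lowest non-injective degree, so every fixed degree is modified only finitely often; therefore each graded piece of $\colim_k\sB_k$ stabilizes to an injective coefficient system, $\sI(\sA)$ is injective and cohomologically $1$-connected, i.e.\ a system of $\DGA$s, and $i$ is a quasi-isomorphism. (Equivalently, one may package this as a fibrant replacement for a model structure on $\sO_G[\DGA]$ whose fibrant objects are precisely the systems of $\DGA$s, obtained from the small-object argument applied to the induced-up acyclic cofibrations, after which the inclusion and quasi-isomorphism assertions are formal.)
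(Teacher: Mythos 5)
The paper does not prove this proposition; it is imported verbatim from Fine--Triantafillou \cite{FINE}, so the only available comparison is with that source. Your construction is essentially theirs: repair the underlying dual coefficient system degree by degree, at each stage tensoring levelwise with a free acyclic algebra $\Lambda(\u W\oplus d\u W)$ on a complement $\u W$ of the offending graded piece inside its injective hull, and twisting the structure maps on the new generators so that the repaired degree becomes the hull rather than a (necessarily non-injective) direct sum. The functoriality identities you isolate, $f_{KL}\psi_{HK}+\psi_{KL}\chi_{HK}=\psi_{HL}$ and $\chi_{KL}\chi_{HK}=\chi_{HL}$, are exactly the block identities of the structure maps of the hull with respect to the chosen splittings, and your observation that the extension in the critical degree must be non-split is the right motivation for the twist. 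The passage to the colimit, the degreewise stabilization, and the quasi-isomorphism claim via acyclicity of $\Lambda(\u W\oplus d\u W)$ are all fine.

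One point needs attention before this is complete. Your identification of the degree-$n_k$ coefficient system of $\sB_{k+1}$ with $\cI(\sB_k^{n_k})$ silently assumes that each $\sB_k(G/K)$ is connected, i.e.\ $\sB_k^0=\u{\QQ}$. With the paper's standing hypothesis (the values are only \emph{cohomologically} $1$-connected, as for the de Rham--Alexander--Spanier system), the degree-$n_k$ component of $\sB_k\otimes\Lambda(\u W\oplus d\u W)$ is $\sB_k^{n_k}\oplus\bigl(\sB_k^0\otimes\u W\bigr)$, whose injectivity is not automatic; and if the lowest non-injective degree is $0$, the free extension contributes $\mathrm{Sym}(\u W)$ in degree $0$ rather than $\u W$, so the repaired degree is not the hull at all. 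So you should either impose levelwise connectedness at the outset (which suffices for all diagrams to which the paper applies \Cref{thm:fineee}, and matches the hypotheses under which the cited construction is carried out), or supply a separate argument handling degree $0$ and the interaction of $\sB_k^0$ with the new generators. With that caveat addressed, your argument reproduces the proof of the cited theorem.
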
 

In \cite{santhanam2023equivariant}, the authors establish a necessary and sufficient condition for the injectivity of a $C_p$-diagram of $\DGA$s. Their result characterizes precisely when such a diagram defines an injective system over the orbit category $\sO_{C_p}$.

\begin{prop}\cite[Proposition 4.1]{santhanam2023equivariant}\label{prop:inj C_p}
Let $G=C_p$, where $p$ is a prime. Let $\cA\in \DGA^{\sO_G}$. Then  $\cA$ as an element of $Vec^{\ast}_G$ is injective if and only if the map $\cA(\hat{e}_{e,G}): \cA(G/e)\to \cA(G/G)$ is surjective. 
\end{prop}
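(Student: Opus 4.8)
The plan is to use the structural characterization of injective dual coefficient systems from \Cref{sum inj}, together with the fact that for $G = C_p$ the orbit category has only two objects, $G/e$ and $G/G$, with the non-identity morphisms being the projection $\hat{e}_{e,G}\colon G/e \to G/G$ and the automorphisms of $G/e$ induced by $G$ itself.

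\emph{Necessity.} Suppose $\cA$ is injective in $Vec^\ast_G$. By \Cref{sum inj}, $\cA = \underline{V}_e \oplus \underline{V}_G$ where $V_G = \cA(G/G)$ and $\underline{V}_G(G/K) = \hom_{\QQ(G/G)}(\QQ(G/G)^K, V_G)$. Evaluating at $K = e$ gives $\QQ(G/G)^e = \QQ$, so $\underline{V}_G(G/e) = V_G = \cA(G/G)$, and evaluating at $K = G$ gives $\underline{V}_G(G/G) = V_G$; one checks the structure map $\underline{V}_G(\hat{e}_{e,G})$ is the identity on $V_G$. Meanwhile $\underline{V}_e(G/G) = \hom_{\QQ(G)}(\QQ(G/e)^G, V_e) = \hom_{\QQ(G)}(0, V_e) = 0$, since $\QQ(G/e)^G = 0$. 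Therefore the structure map $\cA(\hat{e}_{e,G})$ is, under the decomposition, the projection $\underline{V}_e(G/e) \oplus V_G \to 0 \oplus V_G$, which is surjective. So injectivity forces $\cA(\hat{e}_{e,G})$ to be onto.

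\emph{Sufficiency.} Conversely, assume $\phi := \cA(\hat{e}_{e,G})\colon \cA(G/e) \to \cA(G/G)$ is surjective. Here I would use that $\QQ$ is a field, so $\phi$ splits as a map of $\QQ$-vector spaces; moreover since $\cA(G/G)$ carries the trivial $G$-action and $\cA(G/e)$ carries a $G$-action for which $\phi$ is equivariant, I would average the splitting over $G$ (using $|G| = p$ invertible in $\QQ$) to get a $\QQ(G)$-linear section, hence a $\QQ(G)$-module decomposition $\cA(G/e) \cong W \oplus \cA(G/G)$ with $W = \ker\phi$. I then set $V_G := \cA(G/G)$ and $V_e := W$, viewed as a $\QQ(N(e)/e) = \QQ(G)$-module, and verify directly that $\cA \cong \underline{V}_e \oplus \underline{V}_G$ as diagrams: at level $G/e$ this reads $W \oplus V_G \cong \hom_{\QQ(G)}(\QQ(G), W) \oplus \hom_{\QQ(G)}(\QQ(G), V_G) = W \oplus V_G$, and at level $G/G$ it reads $\cA(G/G) \cong 0 \oplus V_G$, with the structure maps matching up by construction. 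By \Cref{sum inj}, $\cA$ is injective.

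The main obstacle I anticipate is bookkeeping rather than conceptual: one must check that the abstract vector-space decomposition obtained from surjectivity is genuinely compatible with the $\QQ(G)$-module structures and with the Triantafillou normal form $\underline{V}_H(G/K) = \hom_{\QQ(N(H)/H)}(\QQ(G/H)^K, V_H)$ at \emph{both} objects simultaneously, so that the isomorphism $\cA \cong \bigoplus_H \underline{V}_H$ is natural in $\sO_{C_p}$. The averaging argument to upgrade a $\QQ$-linear section to a $\QQ(G)$-linear one is where the hypothesis that $p$ is prime (more precisely, that $|G|$ is invertible in $\QQ$) is used; everything else is linear algebra over a field specific to the two-object category $\sO_{C_p}$.
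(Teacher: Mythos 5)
Your argument is correct, but note that the paper does not prove this proposition at all: it is quoted from \cite[Proposition 4.1]{santhanam2023equivariant}, so there is no in-text proof to match against. Judged on its own and against the paper's treatment of the analogous statements, your two directions line up as follows. The necessity direction is essentially the same computation the paper carries out for the $C_{pq}$ analogue (the proposition following \Cref{thm: injective C_pq}): decompose the injective diagram as $\bigoplus_H \underline{V}_H$ via \Cref{sum inj}, evaluate $\underline{V}_e$ and $\underline{V}_G$ at both orbits (your reading $\QQ(G/e)^G=0$, i.e.\ the span of the empty fixed-point set, is the intended one, as the paper's own computation of $\underline{\cI}^{\ast}_e$ confirms), and observe the structure map is a projection, hence surjective. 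The sufficiency direction is where you take a genuinely different route from the method the paper uses for its $C_{pq}$ theorem: there, injectivity is verified directly from \Cref{defn:inj object} by extending morphisms levelwise, whereas you instead produce the Triantafillou normal form explicitly, splitting $\cA(\hat{e}_{e,G})$ equivariantly (Maschke/averaging, using $|G|$ invertible in $\QQ$) and exhibiting $\cA\cong \underline{V}_e\oplus\underline{V}_G$ with $V_e=\ker\cA(\hat{e}_{e,G})$, $V_G=\cA(G/G)$. Both are valid; your normal-form approach is cleaner for $C_p$ because the orbit category has only two objects and one nontrivial structure map, while the direct extension argument is the one that scales to $C_{pq}$, where surjectivity of the structure maps no longer yields the normal form (cf.\ \Cref{exmp: notinjectiveexample}) and the extra Property~I hypothesis is needed. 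The only blemishes are cosmetic: the subscript in your necessity step should read $\hom_{\QQ(N(G)/G)}$ rather than $\hom_{\QQ(G/G)}$, and the naturality/equivariance bookkeeping you defer (identifying $\hom_{\QQ G}(\QQ G, W)\cong W$ compatibly with the $\mathrm{Aut}_{\sO_G}(G/e)\cong G$ action and checking the square against $\cA(\hat{e}_{e,G})$) is routine and goes through as you indicate.
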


\begin{exmp}\label{exmp:noninjective}
Consider the $G$-space $X=S^3$, where $G=C_2$ acts on $S^3$ by reflection, which fixes the equator sphere $S^2$. So here $G=C_2$, $X^G=S^2$ and $X^e=S^3$. The corresponding cohomology diagram is given by $H^\ast(X;\underline{\QQ})$, which is not injective. This follows from Proposition \ref{prop:inj C_p}.  
\end{exmp}

We now introduce the notion of weak equivalences in the category $\sO_G[\DGA]$. To do so, we first define homotopy between morphisms of systems of $\DGA$s.

\begin{definition}
Let $\sU$ be a system of $\DGA$s. Define the tensor extension $\sU(t,dt)$ as the diagram of $\DGA$s given by
\[
\sU(t,dt)(G/H) := \sU(G/H) \otimes_\QQ \QQ(t,dt),
\]
where $\QQ(t,dt)$ denotes the free commutative $\DGA$ on a generator $t$ in degree $0$ and $dt$ in degree $1$. 

Two morphisms of $\DGA$ diagrams $f,g: \sU_1 \to \sU_2$ are said to be \emph{homotopic} (written as $f\simeq g$) if there exists a $\DGA$ morphism
\[
H: \sU_1 \to \sU_2 \otimes \QQ(t,dt)
\]
such that the evaluations $p_0(H) = f$ and $p_1(H) = g$, where $p_i: \QQ(t,dt) \to \QQ$ is defined by $p_i(t) = i$ and $p_i(dt) = 0$ for $i = 0,1$.
\end{definition}

This notion of homotopy does not define an equivalence relation in general on the category $\DGA^{\sO_G}$. However, it becomes an equivalence relation when restricted to \emph{minimal} systems of $\DGA$s (see \Cref{def:minimal}), a fact we elaborate on later in this section.

To define a broader equivalence relation on diagrams of $\DGA$s, we introduce quasi-isomorphisms. Given two diagrams $\sU, \sB \in \sO_G[\DGA]$, a morphism $f: \sU \to \sB$ (or $f: \sB \to \sU$) is said to be a \emph{quasi-isomorphism} if it induces an isomorphism in cohomology at each level; that is, $H^\ast(f_{G/H}): H^\ast(\sU(G/H)) \to H^\ast(\sB(G/H))$ is an isomorphism for all subgroups $H \leq G$. 

The equivalence relation generated by quasi-isomorphisms is referred to as a \emph{weak equivalence} between diagrams of $\DGA$s. Using the notion of injective envelopes, we extend this to arbitrary diagrams as follows:

\begin{definition}\label{def:weakyeqivalent}(Weak equivalence)
Let $\cU$ and $\cV$ be diagrams of $\DGA$s over $\sO_G$. We say that $\cU$ and $\cV$ are \emph{weakly equivalent} if their injective envelopes are weakly equivalent as systems of $\DGA$s.
\end{definition}

Recall that associated with any $G$-space $X$, there is the system of $\DGA$s given by the de Rham-Alexander-Spanier algebra $\sE(X)(G/H):=\cA(X^H)$ for every $H\leq G$. Triantafillou \cite[Theorem 1.5]{GT82} proves that there is a bijective correspondence between $G$-space $X$ (with every fixed point set simply connected) and the minimal system of $\DGA$s $\sM_X$ of $\sE(X).$

Scull generalizes these ideas to spaces with an $S^1$ action. In \cite[Section 21]{LS02}, Scull shows that, unlike the non-equivariant case, the notion of minimality in the equivariant case arising from filtration via minimal extensions of systems of $\DGA$s does not satisfy the decomposability condition. 

Note that homotopy defines an equivalence relation on morphisms from $\sM \to \sB$ for any system of $\DGA$ $\sB$, whenever $\sM$ is a minimal system \cite[Prop. 3.5]{LS02}. Further, given a quasi isomorphism $\rho: \sU \to \sB$ of a system of $\DGA$s and a morphism $f:\sM\to \sB$ is any map from a minimal system $\sM$, there is a lift $g:\sM\to \sU$ such that $\rho g\simeq f$, \cite[Prop. 3.6]{LS02}.

Their results (\cite[Theorem 1.5]{GT82}, \cite[Theorem 4.13]{LS02}) show that the category of $G$-spaces (whose fixed points sets are simply connected)  up to rational homotopy equivalences is equivalent to the category of minimal systems of $1$-connected $\DGA$s modulo homotopy equivalences.

In order to give the construction of a minimal model of a system of $\DGA$s we first define elementary extensions.  

\begin{definition}\label{def:ele}(Elementary extension)
Given a system of $\DGA$s $\mathcal{U}$, a diagram of vector spaces $\underline{V}$ assigned to be of degree $n$, and a map $\alpha: \underline{V}\to \underline{Z}^{n+1}(\mathcal{U})$ (here $\underline{
Z}(\cU)$ denotes the kernel of $\cU$), the {\it elementary extension } of $\mathcal{U}$ with respect to $\alpha$ and $\underbar{V}$, denoted by $\mathcal{U}^{\alpha}(\underbar{V})$, is constructed as follows.

Let $\underbar{V}\to \underbar{V}_0 \xrightarrow{w_0} \underbar{V}_1\xrightarrow{w_1} \underbar{V}_2\cdots$
be minimal injective resolution of $\underbar{V}$ constructed by taking $\underbar{V}_i$ to be the injective embedding of $\operatorname{coker} w_{i-1}$, which is of finite length.  

Construct a commutative diagram (see \Cref{fig:elementaryext}).

\begin{figure}[h]
\centerline{
\xymatrix{
\underline{V}\ar[d]_{\alpha} \ar[r] & \underline{V}_0\ar[d]^{ \alpha_0}\ar[r]^{w_0}& \underline{V}_1\ar[d]^{\alpha_1}\ar[r]^{w_1} & \underline{V}_2\ar[r]\ar[d]^{\alpha_2} & \cdots  \\
\underline{Z}^{n+1}(\mathcal{U})\ar[r]         & \mathcal{U}^{n+1}\ar[r]_{d}&\mathcal{U}^{n+2}\ar[r]_d & \mathcal{U}^{n+3}_d \ar[r] &\cdots
}}
\caption{Diagram for elementary extension}
\label{fig:elementaryext}
\end{figure}

The  maps $\alpha_i$ are  constructed inductively by  first noting that $d\alpha_iw_{i-1}=dd\alpha_{i-1}=0$, so $d\alpha_i|_{\Im w_{i-1}}=0$ and then by the injectivity of $\cU$ we get a commutative diagram (see \Cref{fig:UU}).

\begin{figure}[h]
\centerline{
 \xymatrix{
\underbar{V}_i/\Im w_{i-1} \ar[d]_{d\alpha_i}\ar@{^{(}->}[r]^(0.6){ \rho^{\ast}} &\underbar{V}_{i+1}\ar[dl]^{\alpha_{i+1}}\\
\cU^{n+i+1}}}

\caption{$\alpha_{i+1}$ is constructed using injectivity of $\cU$}
\label{fig:UU}
\end{figure}

Define $\cU^{\alpha}(\underbar{V}):=\cU\otimes(\otimes_i\QQ(\underbar{V}_i))$, where $\QQ(\underbar{V}_i)$ is the free graded commutative algebra generated at $G/H$ by the vector space $\underbar{V}_i(G/H)$ in degree $n+i$; the differential is defined on $\cU$ by the original differential on $\cU$, and on the generators of $\underbar{V}_i$ by $d=(-1)^i\alpha_i+w_i$. Since  $\underbar{V}_i$ is injective for all $i$ by construction, as a vector space the system is the tensor product of injectives and hence injective. Thus, $\cU^{\alpha}(\underbar{V})$ is a new system of $\DGA$s.

\end{definition}

  A minimal system of $\DGA$s is  defined as follows. 
\begin{definition}\label{def:minimal}(Minimal system of $\DGA$s)
A system of $\DGA$s $\sM$ is minimal if $\sM=\cup_n\sM_{n}$, where $\sM_{0}=\sM_{1}=\underline{\mathbb{Q}}$ and $\sM_{n}=\sM_{n-1}(\underline{V})$ is the elementary extension for some diagram of vector spaces $\underline{V}$ of degree $\geq n$. 
\end{definition}

\begin{theorem}\label{thm:quasiisiso}[\cite{LS02} Theorem 3.8]
If $f:\sM\to \sN$ be a quasi-isomorphism between minimal systems of $\DGA$s, then $f\simeq g$, where $g$ is an isomorphism. 

\end{theorem}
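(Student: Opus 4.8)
The plan is to first manufacture a homotopy inverse for $f$, so that $f$ becomes a homotopy equivalence, and then to rectify it within its homotopy class to an honest isomorphism by an induction along the minimal filtration.

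\emph{Producing a homotopy inverse.} Since $\sN$ is minimal and $f\colon\sM\to\sN$ is a quasi-isomorphism, I would apply the lifting statement \cite[Prop.~3.6]{LS02} to the quasi-isomorphism $f$ together with the identity $\mathrm{id}_\sN\colon\sN\to\sN$ (using $\sN$ itself as the minimal source). This produces $g\colon\sN\to\sM$ with $fg\simeq\mathrm{id}_\sN$; as homotopic maps agree on cohomology, $g$ is again a quasi-isomorphism. Applying \cite[Prop.~3.6]{LS02} once more, this time to $g$ and $\mathrm{id}_\sM$, yields $h\colon\sM\to\sN$ with $gh\simeq\mathrm{id}_\sM$. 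Homotopy is preserved by pre- and post-composition, and is transitive on morphisms emanating from the minimal system $\sM$ by \cite[Prop.~3.5]{LS02}, so $f\simeq f(gh)=(fg)h\simeq h$, whence $gf\simeq gh\simeq\mathrm{id}_\sM$ and $fg\simeq\mathrm{id}_\sN$; thus $g$ is a two-sided homotopy inverse of $f$.

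\emph{Rectification.} Write $\sM=\bigcup_n\sM_n$ and $\sN=\bigcup_n\sN_n$ with $\sM_n=\sM_{n-1}^{\alpha_n}(\underline V_{(n)})$ and $\sN_n=\sN_{n-1}^{\beta_n}(\underline W_{(n)})$ the successive elementary extensions (\Cref{def:ele}). I would show, by induction on $n$ and after successive modifications of $f$ by homotopies, that $f$ restricts to an isomorphism $\sM_n\xrightarrow{\ \cong\ }\sN_n$. Given the isomorphism at stage $n-1$, one compares the action of $f$ on the new generators $\underline V_{(n)}$ (and on their minimal injective resolution) with the corresponding data on the $\sN$-side; because $f$ is a quasi-isomorphism and the coefficient systems governing the extensions are, by the construction of \Cref{def:ele}, determined by the cohomology of the stage-$(n-1)$ system and the target cohomology, the diagrams $\underline V_{(n)}$ and $\underline W_{(n)}$ must be isomorphic. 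Since these are injective (\Cref{sum inj}), the obstruction to choosing the images of the new generators so that $f$ becomes an isomorphism at stage $n$ is removed by a homotopy supported on $\underline V_{(n)}$, which is then extended over all of $\sM$ using the lifting property for the minimal source $\sM$. Reparametrising and concatenating the homotopies from the successive stages and passing to the colimit produces a single homotopy from $f$ to a morphism $g$ that is an isomorphism.

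\emph{Main obstacle.} The genuinely delicate part is the rectification: one must prove that the minimal filtrations of $\sM$ and $\sN$ correspond under $f$, i.e.\ that the injective resolutions $\underline V_{(n)}\to\underline V_{(n),0}\to\underline V_{(n),1}\to\cdots$ appearing in the successive elementary extensions match those on the $\sN$-side. This is the equivariant analogue of the fact that rational homotopy is recovered from cohomology, but it is complicated by the phenomenon stressed earlier that a generator introduced at the $n$-th stage may live in a degree larger than $n$; bookkeeping the degrees, and exploiting injectivity (via \Cref{sum inj} and \Cref{def:ele}) to split off the relevant summands at each level, is where the real work lies. A secondary point needing care is that the homotopy relation, and in particular its transitivity, is only guaranteed for morphisms out of minimal systems, which is exactly why every lift above is taken with a minimal source.
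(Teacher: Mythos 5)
First, note that the paper does not actually prove this statement: it is imported verbatim from \cite[Theorem 3.8]{LS02}, so there is no internal proof to compare against, and your attempt has to stand on its own.

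Your first step is fine but peripheral: constructing a two-sided homotopy inverse $g$ of $f$ via the lifting property \cite[Prop. 3.6]{LS02} and transitivity of homotopy for maps out of minimal systems \cite[Prop. 3.5]{LS02} is correct, but you never use $g$ again, and a homotopy inverse is far from an isomorphism. The real content is your rectification step, and there lies a genuine gap. The assertion that ``the diagrams $\underline{V}_{(n)}$ and $\underline{W}_{(n)}$ must be isomorphic'' because the extension data are ``determined by the cohomology of the stage-$(n-1)$ system and the target cohomology'' is precisely what has to be proved, not a consequence of \Cref{def:ele}: an abstract minimal system carries its elementary-extension data as structure, not as something read off from a target, and one must show that $f$, after being replaced by a filtration-preserving representative (\Cref{lemm:13.57}), induces an isomorphism of each elementary extension. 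The non-equivariant shortcut via indecomposables is unavailable, since equivariant minimal systems fail decomposability (as the paper recalls, citing \cite[Section 21]{LS02}) and the generators attached at stage $n$ may sit in degrees $>n$ through the injective resolutions, so a naive degree-by-degree comparison does not work. Similarly, the claim that the remaining ``obstruction is removed by a homotopy supported on $\underline{V}_{(n)}$'' is not backed by any construction: you would need to exhibit that homotopy, check compatibility with the twisted differential $d=(-1)^i\alpha_i+w_i$ on the resolution terms, and verify that the corrected map is an isomorphism of systems at stage $n$. Your ``main obstacle'' paragraph concedes exactly this, so what you have is a plan rather than a proof: the inductive matching of the extension data, which is the heart of \cite[Theorem 3.8]{LS02}, is missing. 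A secondary point: concatenating infinitely many homotopies and ``passing to the colimit'' also requires an argument that the modifications stabilize on each $\sM_n$.
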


Thus, if we have two minimal systems $\sM$, $\sN$ and quasi-isomorphisms $\rho_1:\sM\to \cU$ and $\rho_2:\sN\to \cU$ by the lifting property of maps from minimal systems to systems of $\DGA$s we get a map $f:\sM\to \sN$ which is a quasi-isomorphism. By Theorem \ref{thm:quasiisiso}, we get $f\simeq g$ where $g$ is an isomorphism. Now we define the following.

\begin{definition}\label{def:minimalmdel}(Minimal model)
If $\sM$ is a minimal system and $\rho: \sM\to \sU$ is a quasi-isomorphism, we say that $\sM$ is a minimal model of $\sU$. 
\end{definition}

Maps between two minimal systems of $\DGA$s are much {\it nicer}, in the sense that they are always homotopy to a level-wise map of extensions. We will make use of this fact later. 
\begin{lemma}\cite [Lemma 13.57]{LS02}\label{lemm:13.57}
Any morphism $f:\sM\to \sN$ between minimal systems of $\DGA$ is homotopic to a morphism $g$ which maps $\sM_{n}$ to $\sN_n$ for all $n$.

\end{lemma}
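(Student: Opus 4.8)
The plan is to argue by induction on the filtration degree $n$, using the presentations $\sM=\bigcup_n\sM_n$ and $\sN=\bigcup_n\sN_n$ as unions of elementary extensions (\Cref{def:ele}, \Cref{def:minimal}). Concretely, I would produce morphisms $f=f^{(1)}\simeq f^{(2)}\simeq\cdots$ together with homotopies $f^{(n-1)}\simeq f^{(n)}$ that are \emph{constant on $\sM_{n-1}$}, such that $f^{(n)}(\sM_k)\subseteq\sN_k$ for all $k\le n$. Since each homotopy is stationary on $\sM_{n-1}$ and the $\sM_n$ exhaust $\sM$, only finitely many of these homotopies are nontrivial on a fixed $\sM_n$; after the standard reparametrisation of an infinite concatenation one obtains a genuine homotopy $\sM\to\sN\otimes\QQ(t,dt)$ from $f$ to a morphism $g$ with $g(\sM_n)\subseteq\sN_n$ for every $n$, using that homotopy is an equivalence relation out of the minimal system $\sM$ (\cite[Prop. 3.5]{LS02}). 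The base case $n\le 1$ is immediate because $\sM_0=\sM_1=\underline\QQ$, $\sN_0=\sN_1=\underline\QQ$, and any $\DGA$-morphism is forced to be the canonical one there.

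For the inductive step, assume $f^{(n-1)}(\sM_{n-1})\subseteq\sN_{n-1}$ and write $\sM_n=\sM_{n-1}\otimes\bigotimes_i\QQ(\underline V_i)$, where $\underline V\to\underline V_0\xrightarrow{w_0}\underline V_1\xrightarrow{w_1}\cdots$ is the finite minimal injective resolution used in the extension, so that a generator $x\in\underline V_i$ satisfies $dx=(-1)^i\alpha_i(x)+w_i(x)$ with $\alpha_i(x)\in\sM_{n-1}$ and $w_i(x)$ a linear combination of generators of $\underline V_{i+1}$. The elementary move is that replacing the value $v:=f(x)$ on a single generator $x$ by $v+d\beta$, for any $\beta\in\sN^{|x|-1}$, is realised by the $\DGA$-homotopy that is $f\otimes1$ on $\sM_{n-1}$, sends $x\mapsto v\otimes1+(d\beta)\otimes t+(-1)^{|x|+1}\beta\otimes dt$, and sends every other generator $\gamma$ to $f(\gamma)\otimes1$ --- a direct check that this is a $\DGA$-morphism with the correct evaluations at $t=0,1$, valid as long as $x$ does not occur in the differential of any other generator. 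This last condition is arranged by processing the blocks $\underline V_i$ in decreasing order of $i$ (and handling a single block simultaneously, with the corrections $\beta$ compatible with $w_{i-1}$). After the blocks $\underline V_{i+1},\dots$ have been sent into $\sN_n$, for $x\in\underline V_i$ the element $c:=f(dx)=(-1)^if(\alpha_i(x))+f(w_i(x))$ already lies in $\sN_n$, and $c$ is a cocycle since $dc=f(d^2x)=0$.

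The heart of the step is then the local statement: given $v\in\sN$ with $dv=c$ for a cocycle $c\in\sN_n$, there is $\beta\in\sN$ with $v+d\beta\in\sN_n$, and, as $x$ runs over a block, these $\beta$'s can be chosen compatibly with $w$ so as to assemble into one homotopy. I would prove this using two structural features. First, $\sN$ is obtained from $\sN_n$ by elementary extensions that add only generators of degree $>n$, so $\sN^j=\sN_n^j$ for $j\le n$; hence $H^j(\sN_n)\to H^j(\sN)$ is an isomorphism for $j\le n$ and a monomorphism for $j=n+1$. Second, the stage-$n$ generators of $\sN$ form an injective resolution mirroring $\underline V_\bullet$, so the restriction of $f$ to the stage-$n$ generators of $\sM$, read modulo $\sN_{n-1}$, is exactly a morphism of injective resolutions of coefficient systems lying over the induced map on $\underline V$'s; by the uniqueness up to chain homotopy of such morphisms, $f$ is homotopic rel $\sM_{n-1}$ to the ``canonical'' lift, which lands in $\sN_n$ by construction, and the requisite chain homotopy between the two lifts is precisely the compatible family of $\beta$'s.

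I expect this last point to be the genuine obstacle. Unlike the non-equivariant situation, $\sN_n$ is \emph{not} the subalgebra of $\sN$ generated in degrees $\le n$, and $H^\ast(\sN_n)\to H^\ast(\sN)$ is neither injective nor surjective in high degrees, so one cannot correct $v$ into $\sN_n$ by a naive lifting of cohomology classes; the argument has to be driven by the injective-resolution structure of the elementary extensions on both sides, which is what forces the corrections $\beta$ to be available inside the same stage $\sN_n$. Once this compatibility is arranged, the remainder --- the explicit $\QQ(t,dt)$-homotopy formula above and the passage to the colimit over $n$ --- is routine.
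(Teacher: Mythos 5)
The paper itself offers no proof of this lemma; it is quoted verbatim from Scull \cite{LS02}, so your attempt can only be judged on its own terms. Your overall scheme (induction over the filtration, homotopies constant on $\sM_{n-1}$, a locally finite concatenation at the end, and the observation that $\sN^j=\sN_n^j$ for $j\le n$ so that block-$0$ generators need no correction) is the right shape, but the step you yourself flag as the obstacle is a genuine gap, and the ``local statement'' you reduce to is false as stated. If $v+d\beta=z\in\sN_n$ then $c=dz$ is a coboundary of an element of $\sN_n$, i.e.\ $[c]=0$ in $H^*(\sN_n)$; but the elementary extensions of $\sN$ at stages $m>n$ exist precisely to kill classes of $H^{\ge n+2}(\sN_n)$ that are nonzero there. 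A block-$i$ generator with $i\ge 1$ produces $c=f(dx)$ in degree $n+i+1\ge n+2$, exactly the range in which $H^*(\sN_n)\to H^*(\sN)$ need not be injective, so the two cohomological facts you invoke (equality of $\sN$ and $\sN_n$ below degree $n$, monomorphism in degree $n+1$) only cover block $0$ --- which required no correction in the first place. What has to be shown is that the obstruction class can be removed by a further homotopy (using minimality of $\sN$ and the resolution structure), not that it vanishes automatically, and nothing in the proposal does this.

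The proposed repair via uniqueness of morphisms of injective resolutions up to chain homotopy presupposes what is to be proved: ``the restriction of $f$ to the stage-$n$ generators, read modulo $\sN_{n-1}$'' is not a morphism into the stage-$n$ resolution of $\sN$, because $f$ of a stage-$n$ generator of degree $n+i$ may involve generators of $\sN$ coming from stages strictly between $n$ and $n+i$ --- that is the entire content of the lemma --- so until those components are shown to be removable there is no induced map of resolutions to which a uniqueness statement could be applied. Two further points need repair. Processing the blocks in decreasing order of $i$ does not arrange that the moved generator is absent from other differentials: $w_{i-1}$ of a block-$(i-1)$ generator lies in block $i$, so your explicit one-generator $\QQ(t,dt)$ formula fails to be a DGA morphism there, and the corrections must instead be organized simultaneously as a chain-homotopy-compatible family. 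Moreover that family must be natural over the orbit category (the $\beta$'s have to be morphisms of coefficient systems), which is where the injectivity of $\sN$ and of the $\underline V_i$ actually enters and which the proposal never addresses. None of this is fatal to the strategy, but the central compression step is missing rather than routine.
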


Observe that any minimal system is cohomologically $1$-connected, that is, it satisfies $\underline{H}^0(\sM)=\mathbb{Q}$ and $\underline{H}^1(\sM)=0$. It can be shown that being cohomologically $1$-connected is sufficient for a diagram of $\DGA$s to have a {\it minimal model.}


\begin{theorem}\cite [Theorem 3.11]{LS02}\label{thm:existance of minimal}
If $\sU$ is a system of $\DGA$s which is cohomologically $1-$connected, then there exists a minimal model of $\sU$, i.e., a minimal system $\sM$ and a quasi-isomorphism $\rho:\sM \to \sU$.
\end{theorem}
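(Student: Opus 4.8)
\textbf{Proof proposal for \Cref{thm:existance of minimal}.}

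The plan is to build the minimal model $\sM$ inductively as an increasing union $\sM = \bigcup_n \sM_n$ of systems of $\DGA$s, starting from $\sM_0 = \sM_1 = \underline{\QQ}$, and to construct quasi-isomorphisms-up-to-the-right-degree $\rho_n\colon \sM_n \to \sU$ at each stage, compatibly with the inclusions $\sM_{n-1}\hookrightarrow\sM_n$. At the $n$-th stage I will arrange that $H^k(\rho_n)$ is an isomorphism for $k \le n$ and injective for $k = n+1$, at every orbit level $G/H$ simultaneously; passing to the colimit then yields a system $\sM$ with $\rho = \colim \rho_n$ inducing an isomorphism on all cohomology, i.e.\ a quasi-isomorphism. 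Since each $\sM_n$ is obtained from $\sM_{n-1}$ by an elementary extension (\Cref{def:ele}), the union $\sM$ is minimal by \Cref{def:minimal}, and cohomological $1$-connectivity of $\sU$ gives the base case $\sM_1 = \underline{\QQ}$ with $H^1 = 0$.

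The inductive step splits into the usual two substeps, but everything must be done with diagrams over $\sO_G$ rather than single $\DGA$s. First, to make $H^n(\rho_n)$ surjective: let $\underline{C}$ be the cokernel diagram of $H^n(\sM_{n-1}) \to H^n(\sU)$ in $Vec^\ast_G$, pick a diagram of vector spaces $\underline{V}$ in degree $n-1$ together with a surjection $\underline{V}\twoheadrightarrow \underline{C}$ (e.g.\ $\underline{V}=\underline{C}$ itself), take the zero map $\alpha\colon \underline{V}\to \underline{Z}^{n}(\sM_{n-1})$, form the elementary extension $\sM_{n-1}^{\alpha}(\underline{V})$ — which adjoins closed generators of degree $n-1$ and their injective-resolution partners in higher degrees — and extend $\rho_{n-1}$ by sending each new generator to a chosen cocycle in $\sU$ hitting the corresponding class of $\underline{C}$, and sending the resolution generators $\underline{V}_i$ ($i\ge 1$) to appropriate elements using surjectivity of $\sU\to$ cocycles in the relevant degrees. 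Second, to kill the kernel of $H^{n+1}$: let $\underline{K}$ be the kernel diagram of $H^{n+1}(\sM_{n-1}') \to H^{n+1}(\sU)$ (where $\sM_{n-1}'$ is the result of substep one), choose $\underline{V}'$ in degree $n$ surjecting onto $\underline{K}$, and let $\alpha\colon \underline{V}'\to \underline{Z}^{n+1}(\sM_{n-1}')$ pick out representing cocycles of these classes; the elementary extension $\sM_{n-1}'^{\alpha}(\underline{V}')$ then adjoins degree-$n$ generators whose differential is exactly $\alpha$ (plus the $w_i$ terms), killing $\underline{K}$, and $\rho$ extends because each class of $\underline{K}$ maps to $0$ in $H^{n+1}(\sU)$, so its representative is a coboundary there, giving the image of the new generator; again the higher resolution generators $\underline{V}_i'$ are handled by injectivity/surjectivity in degrees $\ge n+2$. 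Set $\sM_n = \sM_{n-1}'^{\alpha}(\underline{V}')$ and $\rho_n$ the extended map. One checks degrees are $\ge n$ throughout (the first substep adjoins generators in degree $n-1$, which is allowed once $n-1 \ge 2$; for small $n$ one uses $1$-connectivity to start clean), so \Cref{def:minimal} is satisfied.

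The main obstacle, and the place where this differs essentially from the non-equivariant Sullivan construction, is the bookkeeping forced by injective resolutions: an elementary extension does not simply add generators in one degree — it adds the whole minimal injective resolution $\underline{V}\to\underline{V}_0\to\underline{V}_1\to\cdots$, with generators in degrees $n, n+1, n+2,\dots$. I have to verify that extending $\rho$ over the resolution generators $\underline{V}_i$ is possible, which uses that $\sU$ is a \emph{system} (injective as an object of $Vec^\ast_G$) so that the required lifts exist — this is precisely the diagrammatic version of the argument producing the $\alpha_i$ in \Cref{fig:UU} — and that the extension does not disturb the cohomology isomorphism already achieved in degrees $< n$, which follows because the newly added generators live in degrees $\ge n-1$ and their contribution to lower cohomology is controlled. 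A secondary point to be careful about: after substep one I must re-examine $H^{n+1}$, since adding generators in degree $n-1$ can create new degree-$n+1$ cohomology (products and the resolution tail), but these are all accounted for and the kernel in degree $n+1$ remains a finite-length diagram, so substep two terminates. Finiteness/finite-length of all the coefficient system diagrams involved (guaranteed since $G$ is finite and $\sU$ is levelwise of finite type in each degree, as is implicit) ensures the resolutions are of finite length, as recorded in \Cref{def:ele}, so the construction is well-defined at every stage.
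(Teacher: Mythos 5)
This theorem is quoted in the paper from Scull \cite[Theorem 3.11]{LS02} (for finite groups the construction goes back to Triantafillou \cite{GT82}), and the paper gives no proof of its own; your proposal follows the same general route as those sources — build $\sM=\bigcup_n \sM_n$ by elementary extensions, extend $\rho$ stage by stage using injectivity of $\sU$, and pass to the colimit — so the strategy is the right one. However, as written, your first substep fails on degrees. You adjoin \emph{closed} generators in degree $n-1$ (declaring $\underline{V}$ of degree $n-1$ with $\alpha=0\colon\underline{V}\to\underline{Z}^{n}(\sM_{n-1})$) and then propose to send them to degree-$n$ cocycles of $\sU$ representing classes of the cokernel $\underline{C}\subset H^n(\sU)$: that assignment is not degree-preserving, the new closed degree-$(n-1)$ generators would create fresh classes in $H^{n-1}(\sM_n)$ and destroy the isomorphism already achieved in degree $n-1$, and \Cref{def:minimal} requires the $n$-th stage extension to use a diagram of degree $\geq n$, so your remark that degree $n-1$ ``is allowed once $n-1\ge 2$'' does not save it. The correct step adds degree-$n$ generators with $\alpha=0$; both substeps then occur in degree $n$ and in the actual construction are merged into a single elementary extension: one first replaces $\rho_{n-1}$ by a surjection $\beta\colon\sM'_{n-1}\to\sU$ (tensoring with an acyclic system built from the injective envelope, exactly the device used later in the paper, e.g.\ in \Cref{exmp: mainnnn}) and takes $\underline{V}=\underline{H}^{n+1}(\ker\beta)$ in degree $n$.

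The second, more substantive issue is that the genuinely equivariant difficulties are asserted rather than argued. Your kernel substep needs the map $\alpha\colon\underline{V}'\to\underline{Z}^{n+1}$ to ``pick out representing cocycles,'' i.e.\ a section of $\underline{Z}^{n+1}\to\underline{H}^{n+1}$ over $\underline{K}$ that is natural with respect to the structure maps of $\sO_G$; such a diagram-level choice need not exist in $Vec^\ast_G$, and circumventing this is precisely the purpose of the surjective replacement $\beta$ in the cited proofs. Likewise, extending $\rho$ over the resolution generators $\underline{V}_i$ so that it is a cochain map (the diagrammatic analogue of constructing the $\alpha_i$ in \Cref{def:ele}, using that $\sU$ is injective), and verifying that the degree-$(n+i)$ tail of the resolution does not disturb the inductive statements in degrees $\leq n$ and $n+1$, is the technical heart of the theorem; saying ``these are all accounted for'' flags the problem but does not resolve it. So the outline matches the cited proof, but the written argument has one step that fails as stated and leaves the key equivariant verifications unproved.
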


\begin{definition}\label{def:associ:vect}(Associated diagram of vector spaces)
Let $\cA$ be an injective cohomology diagram and $(\sM,\rho)$ be its minimal model. Let $\sM_n=\sM_{n-1}^{\gamma}(\underbar{V})$, be the $n$-th stage construction of $\sM$, which is obtained by taking elementary extension of $\sM_{n-1}$ with the injective resolution of $\underbar{V}$. We refer to $\underbar{V}$ as the  \emph{$n$-th stage associated diagram of vector spaces of $\cA$}. 
\end{definition}

In the non-equivariant setup, we define the following.
\begin{definition}[Retract]\label{defn: retract}
   Given a $\DGA$  $A$ and a sub-$\DGA$ $B$ of $A$, we say $B$ is a retract of $A$ if there is a $\DGA$-morphism $r: A\to B$ such that $r\circ i=id$. Here $i:B\to A$ is the inclusion morphism and the morphism $r$ is called the retraction. Equivalently, we define a map $r:A \to B$ is said to be a retraction if there is a $\DGA$ morphism $i:B\to A$ such that $r\circ i=id$. 
\end{definition}

In the corrected version of \cite{santhanam2023equivariant} the authors prove the following result. We provide the prove for the sake of completeness.
\begin{prop}\label{Z_P injimpliesminimal}
Let $G=C_p$, for $p$ prime. If the structure map in the cohomology diagram $\cA$  $\cA_{e,G}: \cA(G/e)\to \cA(G/G)$ is a retraction of $\DGA$s then the associated diagram of vector spaces is injective. In particular, the minimal model of the cohomology diagram is level-wise minimal.
\end{prop}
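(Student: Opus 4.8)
The plan is to analyze the minimal model construction for $G = C_p$ directly, using the explicit description of injective dual coefficient systems from \Cref{sum inj}. Since $G = C_p$ has only two subgroups, $e$ and $G$, a diagram of vector spaces $\underline{V}$ over $\sO_{C_p}$ is just a single structure map $\underline{V}(\hat e_{e,G})\colon \underline{V}(G/e) \to \underline{V}(G/G)$ (together with the residual $W(e) = N(e)/e = G$-action on $\underline{V}(G/e)$), and by \Cref{sum inj} it is injective precisely when this map is surjective. So the entire task reduces to showing: at each stage $n$ of building $\sM$, the associated diagram of vector spaces $\underline{V}$ has surjective structure map.

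The key idea is that the retraction $r\colon \cA(G/e) \to \cA(G/G)$ of $\DGA$s lets us build the minimal model $\sM$ \emph{compatibly}, so that at every stage the structure map $\sM_n(G/e) \to \sM_n(G/G)$ is itself a retraction of $\DGA$s (hence in particular surjective). I would argue by induction on $n$. The base case $\sM_1 = \underline{\QQ}$ is trivial. For the inductive step, suppose $\sM_{n-1}(G/e) \to \sM_{n-1}(G/G)$ is a retraction; one produces the $n$-th stage by choosing generators to kill cohomology classes / create cohomology as in the non-equivariant Sullivan construction, but now one must choose the diagram of vector spaces $\underline{V}$ of degree $n$. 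Because $\sM_{n-1} \to \cA$ is built level-wise compatibly with the retractions, the relevant cohomology groups $H^{n}(\sM_{n-1}(G/e))$, $H^n(\sM_{n-1}(G/G))$ and $H^{n+1}$ fit into a system whose structure maps are surjective (a retraction on cohomology), and one can choose $\underline{V}$ with surjective structure map mapping onto the relevant kernels/cokernels at both levels simultaneously. Then in the elementary extension $\sM_{n-1}^{\gamma}(\underline{V})$, since $\underline{V}$ already has surjective structure map, its minimal injective resolution $\underline{V} \to \underline{V}_0 \to \underline{V}_1 \to \cdots$ must terminate at $\underline{V}_0$ with $\underline{V}_0 = \underline{V}$ and all higher $\underline{V}_i = 0$ (an injective object needs no further resolution). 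Hence no generators of degree $> n$ are introduced, the new generators sit in degree exactly $n$ at each level, and the extended structure map $\sM_n(G/e) \to \sM_n(G/G)$ remains a retraction (extend the retraction $r$ over the polynomial/exterior generators by using the chosen surjection on $\underline{V}$). This simultaneously proves that $\underline{V}$ is injective at every stage and that $\sM(G/H)$ is the minimal model of $\cA(G/H)$ for each $H$, i.e. $\sM$ is level-wise minimal.

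The main obstacle I anticipate is the bookkeeping in the inductive step: one must check that the $n$-th stage diagram of vector spaces can genuinely be chosen with surjective structure map — that is, that the obstruction classes and the new cohomology generators at level $G/e$ can be organized to surject onto those at level $G/G$ via the retraction on cohomology induced by $r$. Concretely, one needs that $H^*(r)\colon H^*(\sM_{n-1}(G/e)) \to H^*(\sM_{n-1}(G/G))$ is split surjective (which follows from $r$ being a $\DGA$-retraction and functoriality of cohomology) and that this splitting is compatible with the $W$-module structure so the chosen $\underline{V}$ lies in $Vec^*_{C_p}$; this is where the hypothesis that $r$ is a genuine $\DGA$-morphism (not merely a linear splitting) and the semisimplicity of $\QQ[C_p]$-modules over $\QQ$ do the work. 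Once that is in place, the fact that injective objects in $Vec^*_{C_p}$ have length-zero resolutions (immediate from \Cref{prop:inj C_p} and \Cref{sum inj}) closes the argument and yields level-wise minimality by comparing $\sM(G/H)$ with the uniqueness of non-equivariant minimal models (\Cref{thm:quasiisiso} applied level-wise, or the classical Sullivan uniqueness).
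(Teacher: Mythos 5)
Your reduction is correct (over $\sO_{C_p}$, injectivity of a diagram of vector spaces amounts to surjectivity of its structure map, as in \Cref{prop:inj C_p} and \Cref{sum inj}), and your overall strategy — propagate the retraction through the stages of the minimal model — is in the same spirit as the paper. But the inductive step contains a genuine gap, and it sits exactly at the heart of the proposition. The associated diagram of vector spaces $\underline{V}$ at stage $n$ is not something you get to ``choose with surjective structure map'': it is determined by the construction (in the paper it is $H^{n+1}(\ker\beta\oplus\underline{\QQ})$, built from the cokernel of $H^n(\sM_{n-1})\to H^n(\cA)$ and the kernel of $H^{n+1}(\sM_{n-1})\to H^{n+1}(\cA)$), and what must be \emph{proved} is that this particular diagram has surjective structure map. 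Split surjectivity of $H^\ast(r)$ does not suffice: in a commutative square whose two structure maps are surjective, the induced map on the kernels (or on relative terms) need not be surjective unless the two splittings are compatible. The paper supplies exactly this compatibility: using the lifting lemma against the surjective quasi-isomorphism $\rho(G/e)$ it produces a $\DGA$ section $j\colon\sM(G/G)\to\sM(G/e)$ of $\sM_{e,G}$ satisfying $\rho(G/e)\circ j=i\circ\rho(G/G)$, and it is this identity that guarantees $j$ carries kernel classes to kernel classes and, via a case analysis (classes from $\sM_{n-1}$, from $\Sigma\cA$, and products of the two with $\cA$), that every class in $\underline{V}(G/G)$ has a preimage in $\underline{V}(G/e)$. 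Your phrase ``because $\sM_{n-1}\to\cA$ is built level-wise compatibly with the retractions'' assumes this compatible section rather than establishing it; likewise, extending the retraction over the new generators is not automatic from semisimplicity of $\QQ[C_p]$, because the differential on the new generators is twisted by the maps $\alpha_i$ of \Cref{def:ele}, so the section on $\underline{V}$ must be chosen compatibly with these — again the same missing compatibility.

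Two smaller points. First, your claim that a surjective (hence injective) $\underline{V}$ has minimal injective resolution of length zero is fine, and is indeed how level-wise minimality follows once injectivity of each $\underline{V}$ is known. Second, since you propose to build a new minimal system rather than analyze the given minimal model $\rho\colon\sM\to\cA$ (as the paper does), you would additionally need to transfer the conclusion through uniqueness of minimal models (\Cref{thm:quasiisiso}, together with \Cref{lemm:13.57} to compare the stage filtrations); this is routine but should be said. The essential repair, however, is the construction of the compatible section $j$ (the paper's lifting-lemma step) and the explicit verification that the determined $\underline{V}$ — including its product-type classes — has surjective structure map.
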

\begin{proof}

Since the structure map $\cA_{e,G}:\cA(G/e)\to \cA(G/G)$ is a retraction, there exists $i:\cA(G/G)\to \cA(G/e)$ such that $\cA_{e,G}\circ i=id$. This  implies $\cA_{e,G}$ is surjective, and it follows that $\cA$ is injective diagram  of graded algebras. Note that for any minimal system of $\DGA$s $\sN$, the $\DGA$ $\sN(G/G)$ is non-equivariantly minimal by construction. 
    Let $\rho:\sM\to \cA$ be the minimal model and let $\sM_{e,G}:\sM(G/e)\to \sM(G/G)$ be the corresponding structure map. We claim that there exists $j:\sM(G/G)\to \sM(G/e)$ an inclusion map of $\DGA$s. 
    Since $\cA(G/e)$ is a $\DGA$ with zero differential we have, $\rho(G/e):\sM(G/e)\to \cA(G/e)$ is surjective quasi-isomorphism, by Lifting Lemma (\cite[Lemma 12.4]{FHT}), there exists a lift $j:\sM(G/G)\to \sM(G/e)$ so that the \Cref{fig:levelwisemin} commutes  
        \begin{figure}[h]
        \centerline{
\xymatrix{
\sM(G/e)\ar[r]^{ \rho(G/e)}  & \cA^i(G/e) \\
\sM(G/G) \ar[r]_{\rho(G/G)}\ar[u]^{j}   & \cA^i(G/G)\ar[u]_{i}
}}
\caption{Horizontal arrows are quasi-isomorphisms, $i,j$ are inclusions }
\label{fig:levelwisemin}
\end{figure}
Therefore, 
\begin{equation}
    \cA_{e,G}\circ i \circ \rho(G/G)= \cA_{e,G} \circ \rho(G/e)\circ j\\
    \implies \rho(G/G)=\rho(G/G)\circ \sM_{e,G}\circ j
\end{equation}
Since $\rho(G/G)$ is a quasi-isomorphism , $\sM_{e,G}\circ j:\sM(G/G)\to \sM(G/G)$ is a quasi-isomorphism. It then follows that, $\sM_{e,G}\circ j$ is an isomorphism and therefore $j:\sM(G/G)\to \sM(G/e)$ is an inclusion and $\sM_{e,G}$ is a surjection. Also note that if we let $\sM_{e,G}\circ j=\phi$, where $\phi$ is isomorphism, then $\sM_{e,G}\circ j\circ \phi^{-1}=id$, making $\sM_{e,G}$ a retraction.


Next, we show that all the associated systems of vector spaces are injective by induction on $n$ where $\sM=\cup \sM_m$. 
    Recall $\sM_n=\sM_{n-1}(\underline{V})$, where $\underline{V}$ is $H^{n+1}(\ker (\beta) \oplus \underline{\QQ})$ is the associated diagram of vector spaces at $n$-th stage. Any element of $\underbar{V}(G/G)$ looks like the product of the elements of $\sM_{n-1}(G/G)$, $\cA(G/G)$ and $\sum \cA(G/G)$. We study case by case to conclude that $\underbar{V}(\hat{e}_{e,G})$ is surjective. Let $[x]\in \underbar{V}(G/G)$.

    \begin{enumerate}
        \item 
        If $x\in \sM^{n+1}_{n-1}(G/G)$ i.e., $[x]\in \underline{V}(G/G)$. Then $j(x)\in \sM_{n-1}(G/e)$. 

        As $\beta=\rho$ on $\sM_{n-1}$, we have $i\circ \rho(G/G)(x)=\rho(G/e)\circ j(x)$, this implies $i\circ \beta(G/G)(x)=\beta(G/e)\circ j(x)$, which implies $j(x)\in \ker \beta(G/e)$. 
        As $j$ is a $\DGA$-map we get $dj(x)=jd(x)$, which gives $j(x)\in \underline{V}(G/e)$. 
        \item If $x\in \sum \cA(G/G)$ then by injectivity of $\cA$, one gets a pre-image in $\sum \cA(G/e)$. As the differential is zero for elements in $\sum \cA$. So we get a pre-image in $\underbar{V}(G/e)$. 

        \item Assume $x$ is the product of elements in $\sM_{n-1}$, $\cA$ and $\sum \cA$. In this case, note that the maps $i,j$ induces a $\DGA$-map $g: \sM(G/G)\otimes\QQ (\cA \oplus \sum \cA)(G/G)\to \sM(G/e)\otimes\QQ (\cA \oplus \sum \cA)(G/e)$. If $x=m.a.sb$ where $m\in \sM(G/G)$, $a\in \cA(G/G)$ and $sb\in \sum \cA(G/G)$, with $[x]\in \underline{V}(G/G)$, then we have $[g(m.a.sb)]\in \underline{V}(G/e)$.

    \end{enumerate}

Hence, $\underbar{V}$ is injective and it follows that  the minimal model $\sM$ is level-wise minimal. 
\end{proof}

Surjective morphisms between non-equivariant minimal models exhibit useful structural properties. One such consequence is the following result.

\begin{lemma}\label{lemma: minimal surjective}
Let $\varphi \colon A \to B$ be a morphism of $\DGA$s, where both $A$ and $B$ are minimal Sullivan algebras (i.e., non-equivariantly minimal). 

For each $k \geq 1$, let $A_{\leq k}$ and $B_{\leq k}$ denote the subalgebras of $A$ and $B$ generated by elements of degree less than or equal to $k$. Then, by the structure of minimal Sullivan algebras (via Hirsch extensions), we can write:
\[
A_{\leq k} = A_{\leq k-1} \otimes \wedge V^k 
\quad \text{and} \quad
B_{\leq k} = B_{\leq k-1} \otimes \wedge W^k,
\]
where $V^k$ and $W^k$ are $\mathbb{Q}$-vector spaces concentrated in degree $k$.

If the map $\varphi \colon A \to B$ is surjective, then for each $k$, the induced map $A_{\leq k} \to B_{\leq k}$ is also surjective. In particular, the induced linear map $V^k \to W^k$ is surjective for each $k$.
\end{lemma}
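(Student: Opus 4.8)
The plan is to set up an induction on $k$ that simultaneously tracks surjectivity of $A_{\leq k}\to B_{\leq k}$ and, as the key auxiliary statement, surjectivity of $V^k\to W^k$. The base case $k=1$ is trivial since both algebras are $1$-connected, so $A_{\leq 1}=B_{\leq 1}=\mathbb{Q}$. For the inductive step, I would assume $A_{\leq k-1}\to B_{\leq k-1}$ is surjective and argue that $A_{\leq k}\to B_{\leq k}$ is surjective. Because $B_{\leq k}=B_{\leq k-1}\otimes \wedge W^k$ is generated as an algebra by $B_{\leq k-1}$ together with $W^k$, and since the image of $\varphi$ is a subalgebra containing $\varphi(A_{\leq k-1})=B_{\leq k-1}$ (by the inductive hypothesis and the fact that $\varphi$ is degree-preserving, hence maps $A_{\leq k-1}$ into $B_{\leq k-1}$), it suffices to show that every element of $W^k$ lies in the image of $\varphi$.

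The first genuine point is that $\varphi$, being a morphism of minimal Sullivan algebras, respects the canonical filtration by degree: $\varphi(A_{\leq j})\subseteq B_{\leq j}$ for all $j$, simply because $\varphi$ preserves degree and $B_{\leq j}$ is exactly the span of products of generators of $B$ of degree $\leq j$, equivalently all of $B$ in the range of degrees where $B$ is generated in degree $\leq j$ — here one uses that minimal Sullivan algebras have no generators in degree $0$ and that $B^{<j}$-products cannot involve higher generators. So $\varphi$ induces $A_{\leq k}\to B_{\leq k}$. Now pick $w\in W^k\subseteq B^k$. Since $\varphi\colon A\to B$ is surjective, there is $a\in A$ with $\varphi(a)=w$; as $\varphi$ is degree-preserving we may take $a\in A^k$. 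Then $a\in A_{\leq k}$, so $w=\varphi(a)$ lies in the image of $A_{\leq k}$. This already shows $A_{\leq k}\to B_{\leq k}$ hits all algebra generators of $B_{\leq k}$ (namely $B_{\leq k-1}$ and $W^k$), and since the image is a subalgebra, $A_{\leq k}\to B_{\leq k}$ is surjective.

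It remains to extract surjectivity of the linear map $V^k\to W^k$. The map on generators is, by definition, the composite $V^k\hookrightarrow A^k\xrightarrow{\varphi} B^k\twoheadrightarrow B^k/(B_{\leq k-1}^k)=W^k$, where the last arrow is the quotient by degree-$k$ elements coming from lower generators (this is well-defined because $B_{\leq k}=B_{\leq k-1}\otimes\wedge W^k$ exhibits $W^k$ as a direct summand of $B^k$ complementary to the decomposables/lower part). Take $w\in W^k$, lift it to $b\in B^k$; by the surjectivity of $A_{\leq k}\to B_{\leq k}$ just established, write $b=\varphi(a)$ with $a\in A^k\cap A_{\leq k}=A^k$. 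Decompose $a = v + a'$ with $v\in V^k$ and $a'\in A_{\leq k-1}^k$, using $A_{\leq k}=A_{\leq k-1}\otimes\wedge V^k$. Then $\varphi(a')\in B_{\leq k-1}^k$, so modulo $B_{\leq k-1}^k$ we get $w\equiv\varphi(v)\equiv \bar\varphi(v)$, i.e. $v\mapsto w$ under $V^k\to W^k$. Hence $V^k\to W^k$ is surjective.

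The only step requiring care — and the one I would be most alert about — is the claim that $\varphi(A_{\leq k-1})\subseteq B_{\leq k-1}$ and the identification of $W^k$ as a canonical quotient of $B^k$; both rely on the fact that in a minimal Sullivan algebra the subalgebra generated by generators of degree $\leq k$ coincides, in degrees $\leq k$, with the whole algebra (no higher generators can appear), together with the directness of the decomposition $B_{\leq k}=B_{\leq k-1}\otimes\wedge W^k$. Once that bookkeeping is pinned down, the rest is formal. I do not anticipate needing anything beyond the Hirsch-extension structure of minimal Sullivan algebras recalled in the statement, so no appeal to homotopy-theoretic lifting is necessary here — the argument is purely algebraic, proceeding degree by degree.
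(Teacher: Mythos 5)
Your proposal is correct and follows essentially the same argument as the paper's: degree preservation places a preimage of any degree-$\leq k$ element (in particular of a generator $w\in W^k$) inside $A_{\leq k}$, and splitting that preimage as a generator plus a decomposable/lower-filtration part shows the linear map $V^k\to W^k$ is surjective. The inductive wrapper and your identification $W^k\cong B^k/B^k_{\leq k-1}$ are only cosmetic variants of the paper's projection $\pi_W$ onto indecomposables, so the two proofs coincide in substance.
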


\begin{proof}
Consider the projection $\pi_W : B \to W$ be the projection onto generators, using the decomposition as a direct sum $$B = W \oplus W^{\otimes 2}/S_2 \oplus \dots.$$
Let $\phi|_V$ be the restriction of $\phi$ to generators. Then $\pi_W \circ \phi|_V : V \to W$ is the linear part of $\phi$.

Since $B_{\leq k}$ is generated by elements of degree $\leq k$, it suffices to prove that $A_{\leq k} \to B_{\leq k}$ hits all of these elements. But if $y \in B_{\leq k}$ is of degree $\leq k$, since $\phi$ is surjective, there exists $x \in A$ with $\phi(x) = y$; and $\deg(x) = \deg(y) \leq k$ so $x \in A_{\leq k}$.

Moreover, the linear part of $f\phi$ is also surjective in every degree. Suppose $w \in W^k$ is a generator of degree $k \geq 2$. Since $\phi$ is surjective, there exists $a \in A$ such that $f(a) = w$. The element $a$ decomposes as $a = v + a'$, where $v \in V$ and $a' = b_1 c_1 + \dots + b_l c_l \in A$ is decomposable as a sum of nontrivial products of elements of positive degree. It follows that $w = f(a) = f(v) + \sum_i f(b_i) f(c_i)$ projects to $f(w)$ under $\pi_W$. But of course $\pi_W(w) = w$ so $\pi_W(f(v)) = w$.
\end{proof}

\begin{definition}[Pullback] \label{defn:pullback}
Let $A, B, C$ be $\DGA$s over $\QQ$, and let $f: A \to C$ and $g: B \to C$ be morphisms of \DGA s. The \emph{pullback DGA}, $A \times_C B$ is defined as follows:

\begin{itemize}
  \item As a graded module,
  \[
  (A \times_C B)_n = \{ (a, b) \in A_n \times B_n \mid f(a) = g(b) \in C_n \}.
  \]

  \item The multiplication is defined componentwise:
  \[
  (a_1, b_1) \cdot (a_2, b_2) = (a_1 a_2, b_1 b_2).
  \]

  \item The differential is defined by:
  \[
  d(a, b) = (d_A a, d_B b).
  \]
\end{itemize}

This makes $A \times_C B$ into a $\DGA$, and it satisfies the universal property of the pullback in the category of $\DGA$s i.e., given $h:D\to A$ and $e:D\to C$ so that $fh=ge$ there exists $\tau:D\to A\times_C B$ so that the following diagram commutes (\Cref{Fig:4}). 

\begin{figure}[h] \centerline{%
 \xymatrix{
D\ar@{-->}[dr]^{\tau}\ar@/_2.0pc/@{->}@[black][ddr]_{h}\ar@/^2.0pc/@{->}@[black][rrd]^{e} & & \\
& A \times_C B\ar[r]^{\pi_2}\ar[d]_{\pi_1} & B\ar[d]^{g} \\
& A\ar[r]_{f} & C 
 }}
\caption{Pull-back diagram}\label{Fig:4}
\end{figure}
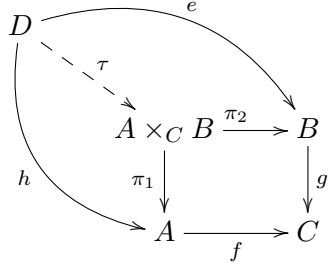

The map $\tau $ is given by $\tau(d)=(h(d),e(d))$ for any $d\in D$.
\end{definition}

\begin{remark}
Consider the diagram in \Cref{defn:pullback}. If $f,g$ are surjective then $\pi_i$ for $i=1,2$ are surjection. Additionally, if $\tau$ is surjection so are $h,e$. This can be seen in the following way.  
Let $a\in A$. As $g$ is surjection there exists $b\in B$ so that $f(a)=g(b)$ so that $(a,b)\in A\times _C B$. Hence $\pi_1$ is surjective. Using a similar argument, we conclude that $g$ is surjective. 
Additionally, if we assume $\tau$ is surjective it follows that $h,e$ are surjective.   
\end{remark}

Motivated from \cite[Example 2.47]{FOT2008} we define the following.

\begin{definition}(Wedge product of $\DGA$s)\label{defn:wedge of DGAs}
    Let \( A = (A^\ast, d_A) \) and \( B = (B^\ast, d_B) \) be $\DGA$s over \( \mathbb{Q} \), each with unit \(1_A\) and \(1_B\). The \emph{wedge sum} \( A \vee B \) is defined as the pushout in the category of DGAs over \( \mathbb{Q} \), i.e.,
\[
A \vee B := (A \oplus B) / \langle 1_A - 1_B \rangle,
\]
with the following structure:
\begin{itemize}
    \item The unit is \(1 := 1_A = 1_B\).
    \item The multiplication is given by:
    \[
    (a + b) \cdot (a' + b') = aa' + bb',
    \]
    with the additional condition that
    \[
    a \cdot b' = 0 = b \cdot a' \quad \text{for all } a, a' \in A, \; b, b' \in B.
    \]
    \item The differential is defined componentwise:
    \[
    d(a + b) = d_A(a) + d_B(b).
    \]
\end{itemize}

This DGA reflects the cohomology ring of a wedge sum of spaces, where cohomology classes from the two summands do not interact except through the common unit.
\end{definition}

Let $f: A \to A'$ and $g: B \to B'$ be morphisms of $\DGA$s, and assume that $A \vee B$ and $A' \vee B'$ are defined as in \Cref{defn:wedge of DGAs}.

Then the induced map
\[
f \vee g : A \vee B \longrightarrow A' \vee B'
\]
is defined on representatives by
\[
(f \vee g)([a + b]) := [f(a) + g(b)],
\]
for all $a \in A$, $b \in B$. This is well-defined because $f(1_A) = 1_{A'}$, $g(1_B) = 1_{B'}$, and $f(a)g(b) = 0$ in $A' \vee B'$ whenever $a \in \overline{A}$, $b \in \overline{B}$.

Moreover, since $f$ and $g$ are DGA maps, $f \vee g$ respects the differential:
\begin{align*}
d(f \vee g)([a + b]) 
&= d([f(a) + g(b)]) \\
&= [d(f(a)) + d(g(b))] \\
&= [f(da) + g(db)] \\
&= (f \vee g)([da + db]) \\
&= (f \vee g)(d[a + b]).
\end{align*}

\begin{lemma}
    Let $f:A\to A'$ and $g:B\to B'$ be retraction of $\DGA$s then the induced map $f\vee g:A\vee A'\to B\vee B'$ is also a retraction.
\end{lemma}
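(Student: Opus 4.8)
The plan is to unpack the definition of a retraction and build the required section of $f \vee g$ directly from the sections of $f$ and $g$. By \Cref{defn: retract}, saying that $f \colon A \to A'$ is a retraction means there is a $\DGA$-morphism $i_A \colon A' \to A$ with $f \circ i_A = \mathrm{id}_{A'}$; similarly there is $i_B \colon B' \to B$ with $g \circ i_B = \mathrm{id}_{B'}$. (I note in passing that the statement as written has the arrows $f \vee g \colon A \vee A' \to B \vee B'$, which does not typecheck against $f \colon A \to A'$; I would first correct this to $f \vee g \colon A \vee B \to A' \vee B'$, matching the paragraph immediately preceding the lemma, and then produce a section $A' \vee B' \to A \vee B$.)

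First I would observe that the assignment $[a' + b'] \mapsto [i_A(a') + i_B(b')]$ is a well-defined $\DGA$-morphism $A' \vee B' \to A \vee B$: this is the map $i_A \vee i_B$, and it is well-defined, multiplicative, unital, and compatible with the differential by exactly the same formal verification already carried out in the excerpt for $f \vee g$ (one uses $i_A(1_{A'}) = 1_A$, $i_B(1_{B'}) = 1_B$, and the vanishing of cross-terms $\overline{A} \cdot \overline{B} = 0$ in the wedge). Then I would check the retraction identity: for any class $[a + b] \in A \vee B$,
\[
(f \vee g) \circ (i_A \vee i_B)\,([a' + b']) = (f \vee g)\,([i_A(a') + i_B(b')]) = [f(i_A(a')) + g(i_B(b'))] = [a' + b'],
\]
using $f \circ i_A = \mathrm{id}$ and $g \circ i_B = \mathrm{id}$. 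Hence $f \vee g$ is a retraction with section $i_A \vee i_B$.

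There is essentially no obstacle here: the result is a routine consequence of the functoriality of the wedge construction (the excerpt has already established that $\vee$ is functorial and that the induced maps are $\DGA$-morphisms), together with the fact that a functor preserves split epimorphisms — applying $(-)\vee(-)$ to the pair of identities $f \circ i_A = \mathrm{id}$, $g \circ i_B = \mathrm{id}$ yields $(f \vee g) \circ (i_A \vee i_B) = \mathrm{id}_A \vee \mathrm{id}_B = \mathrm{id}_{A \vee B}$. The only point that needs a word of care is confirming that $i_A \vee i_B$ lands in the wedge $A \vee B$ and respects the relation $\langle 1_A - 1_B\rangle$ — which is immediate since $\DGA$-maps are unital — and fixing the mild typo in the statement of the arrows.
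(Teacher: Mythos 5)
Your proposal is correct and follows essentially the same route as the paper's own proof: take the sections $i\colon A'\to A$, $j\colon B'\to B$, form $i\vee j$, and verify $(f\vee g)\circ(i\vee j)=\mathrm{id}$ directly on representatives. Your remark about the misstated arrows (the map should be $f\vee g\colon A\vee B\to A'\vee B'$) is a fair catch of a typo in the lemma's statement, but otherwise the two arguments coincide.
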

\begin{proof}
As $f,g$ are retractions there exist $i:A'\to A$ and $j:B'\to B$ such that $f\circ i=id_{A'}$ and $g\circ j=id_{B'}$.

As above we can define $i \vee j : A' \vee B' \to A \vee B$ by
\[
(i \vee j)([a' + b']) := [i(a') + j(b')]
\]
for $a \in A'$, $b \in B'$. 

Then we compute:
\[
(f \vee g) \circ (i \vee j)([a + b]) = (f \vee g)([i(a) + j(b)]) = [f(i(a)) + g(j(b))] = [a + b],
\]
using that $f \circ i = {id}_{A'}$ and $g \circ j = {id}_{B'}$.

Thus, $(f \vee g) \circ (i \vee j) = {id}_{A \vee B}$, and $f \vee g$ is a retraction.
\end{proof}

\begin{lemma}\label{lemma: zigzag weak}
Let $A \simeq A'$ and $B \simeq B'$ be DGAs, where the weak equivalences are given by zigzags of quasi-isomorphisms. Then the wedge (coproduct) $A \vee B$ is weakly equivalent to $A' \vee B'$, i.e.,
$$
A \vee B \simeq A' \vee B'.
$$
\end{lemma}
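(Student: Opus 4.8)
The plan is to show that wedging with a fixed DGA preserves quasi-isomorphisms, and then propagate this along the zigzags defining the weak equivalences $A \simeq A'$ and $B \simeq B'$. The key reduction is the following: if $\varphi \colon A \to A'$ is a quasi-isomorphism, then for any DGA $B$ the induced map $\varphi \vee \mathrm{id}_B \colon A \vee B \to A' \vee B$ is again a quasi-isomorphism; by symmetry the same holds for wedging on the other side, and then a general zigzag $A \leftarrow \bullet \to \cdots \to A'$ wedged with the identity of $B$ yields a zigzag of quasi-isomorphisms between $A \vee B$ and $A' \vee B$. Finally one composes: $A \vee B \simeq A' \vee B \simeq A' \vee B'$, the first weak equivalence coming from the zigzag for $A \simeq A'$ wedged with $\mathrm{id}_B$, the second from $\mathrm{id}_{A'}$ wedged with the zigzag for $B \simeq B'$, and weak equivalence is by definition generated as an equivalence relation by quasi-isomorphisms so this chains together.

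To prove the reduction I would use the explicit description of $A \vee B$ from \Cref{defn:wedge of DGAs}. As a cochain complex, $A \vee B = \QQ \cdot 1 \oplus \overline{A} \oplus \overline{B}$ where $\overline{A}, \overline{B}$ are the augmentation ideals (kernels of the augmentations to $\QQ$ in degree $0$), with differential acting componentwise; crucially the differential does not mix the $\overline{A}$ and $\overline{B}$ summands, and the unit $1$ is a cocycle. Hence as a complex $A \vee B \cong \QQ \oplus \overline{A} \oplus \overline{B}$ splits as a direct sum, and therefore $H^*(A \vee B) \cong H^*(\QQ) \oplus H^*(\overline{A}) \oplus H^*(\overline{B})$. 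Since $A$ is cohomologically $1$-connected (we work with such DGAs throughout), $H^0(A) = \QQ$ and $H^0(\overline A) = 0$, so $H^*(\overline A) \cong \widetilde H^*(A)$, the reduced cohomology, i.e. $H^+(A)$; similarly for $B$. Thus $H^*(A \vee B) \cong \QQ \oplus H^+(A) \oplus H^+(B)$ naturally. The map $\varphi \vee \mathrm{id}_B$ respects this decomposition, acting as the identity on the $\QQ$ and $H^+(B)$ summands and as $H^*(\varphi)|_{H^+}$ on the $H^+(A)$ summand; since $\varphi$ is a quasi-isomorphism this last map is an isomorphism, so $\varphi \vee \mathrm{id}_B$ is a quasi-isomorphism.

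One technical point to handle with care is that wedge sums of cohomologically $1$-connected DGAs remain cohomologically $1$-connected (immediate from the cohomology computation above: $H^0 = \QQ$, $H^1 = H^1(A) \oplus H^1(B) = 0$), so all objects appearing stay inside the category where \Cref{def:weakyeqivalent} and the earlier machinery apply; and that the intermediate DGAs in the given zigzags are also of this type — if they are not explicitly assumed to be, one notes the cohomology splitting argument above only uses that the differential does not mix summands and the unit is a cocycle, which holds for any wedge, and the reduced-cohomology identification $H^*(\overline A) = H^+(A)$ holds verbatim without $1$-connectedness. I do not anticipate a genuine obstacle here; the only mild subtlety is bookkeeping the naturality of the splitting $H^*(A \vee B) \cong \QQ \oplus H^+(A) \oplus H^+(B)$ so that it is visibly functorial in maps of the form $\varphi \vee \psi$, which is what lets the zigzags be transported termwise. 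Once that is in place the result follows by concatenating the two transported zigzags.
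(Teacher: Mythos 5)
Your proof is correct, and it follows the same basic strategy as the paper (transport the given zigzags through the wedge construction), but it is organized differently and supplies a key step the paper only asserts. The paper pads the two zigzags to equal length and wedges them termwise into a single zigzag $A_i \vee B_i \xleftrightarrow{\simeq} A_{i+1}\vee B_{i+1}$, justifying each arrow by the bare claim that a wedge of quasi-isomorphisms (between cofibrant DGAs) is again a quasi-isomorphism. You instead factor the problem: first transport the $A$-zigzag wedged with $\mathrm{id}_B$ to get $A\vee B \simeq A'\vee B$, then the $B$-zigzag wedged with $\mathrm{id}_{A'}$, and concatenate. This buys you two things. First, you actually prove the preservation statement, via the chain-level splitting of $A\vee B$ into the unit line and the two augmentation ideals, which gives $H^*(A\vee B)\cong \QQ\oplus H^+(A)\oplus H^+(B)$ naturally in maps of the form $\varphi\vee\psi$; in particular your computation shows no cofibrancy hypothesis is needed, so the paper's parenthetical appeal to cofibrant DGAs (which the intermediate terms of a zigzag are not assumed to be) is avoided. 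Second, your two-stage transport sidesteps a small gap in the termwise wedging: at a given index the $A$-arrow and $B$-arrow of the padded zigzags may point in opposite directions, in which case no single map $A_i\vee B_i \to A_{i+1}\vee B_{i+1}$ is available, whereas wedging one zigzag at a time with an identity never meets this issue. The only caveat in your write-up is the use of augmentation ideals: a splitting $A=\QQ\cdot 1\oplus\overline A$ compatible with arbitrary DGA maps requires $A^0=\QQ$ (true in all of the paper's applications) or a chosen augmentation preserved by the maps; if you want full generality, replace it by the exact sequence $0\to\QQ\to A\oplus B\to A\vee B\to 0$, $1\mapsto(1_A,-1_B)$, whose long exact sequence gives the same natural identification of $H^*(A\vee B)$ without choosing splittings.
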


\begin{proof}
Suppose we have zigzags of quasi-isomorphisms connecting $A$ to $A'$ and $B$ to $B'$ as follows:
$$
A = A_0 \xleftrightarrow{\simeq} A_1 \xleftrightarrow{\simeq} \cdots \xleftrightarrow{\simeq} A_n = A', \quad
B = B_0 \xleftrightarrow{\simeq} B_1 \xleftrightarrow{\simeq} \cdots \xleftrightarrow{\simeq} B_m = B'.
$$
Assume without loss of generality that $n \leq m$. We can extend the shorter zigzag (say, the $A_i$'s) by repeating the terminal DGA $A_n$ as needed, defining $A_i = A_n$ for $i > n$, so that both sequences have the same length.

Then we define a zigzag of quasi-isomorphisms between the wedge products:
$$
A_0 \vee B_0 \xleftrightarrow{\simeq} A_1 \vee B_1 \xleftrightarrow{\simeq} \cdots \xleftrightarrow{\simeq} A_m \vee B_m,
$$
where each map is the wedge (coproduct) of quasi-isomorphisms between the respective terms. Since the wedge of quasi-isomorphisms between cofibrant DGAs is again a quasi-isomorphism, this sequence forms a zigzag of quasi-isomorphisms from $A \vee B$ to $A' \vee B'$.

Hence, $A \vee B \simeq A' \vee B'$.
\end{proof}

    \section{Injectivity results}\label{section:injecti}

    In this section, we consider $G=C_{pq}$, be the cyclic group of order $pq$, where $p$ and $q$ are distinct prime numbers. For a $G$-space $X$ and a subgroup $H$ of $G$, the fixed point space for $H$ is defined as $X^H:=\{x\in X~|~h.x=x~\text{for all }h\in H\}$. Now $G$ has four subgroups of order $1,p,q,pq$ and which we denote by $e,P,Q,G$ respectively. Let $\cA\in \sO_{C_{pq}}[\DGA]$ (see \Cref{Fig:222}).

\begin{figure}[h] \centerline{%
 \xymatrix{\cA(G/e)\ar[d]_{\cA_{e,P}}\ar[r]^{\cA_{e,Q}} & \cA(G/Q)\ar[d]^{\cA_{Q,G}}\\
    \cA(G/P)\ar[r]_{\cA_{P,G}} & \cA(G/G)
 }}
\caption{$\cA$ as $\sO_{C_{pq}}$-diagram}\label{Fig:222}
\end{figure}

    where the vertical and horizontal maps are the structure maps.
Consider the maps $$\cA(G/P)\xrightarrow{\cA_{P,G}} \cA(G/G)\xleftarrow{\cA_{Q,G}}\cA(Q,G)$$ and take the pullback $\DGA$ (as in \Cref{defn:pullback}) call it $\cK$ so that we have the following commutative square (\Cref{Fig:3}).

\begin{figure}[h] \centerline{%
 \xymatrix{
  \cK\ar[d]_{\pi_1}\ar[r]^{\pi_2} & \cA(G/Q)\ar[d]^{\cA_{Q,G}}\\
    \cA(G/P)\ar[r]_{\cA_{P,G}} & \cA(G/G)
    }}
\caption{$\cK$ is the pull-back with respect to $\cA_{P,G}$ and $\cA_{Q,G}$}\label{Fig:3}
\end{figure}

    By the universal property of pullback there is a unique map $\cA_{\cK}: \cA(G/e) \to \cK$ so that the following diagram commutes (see \Cref{Fig:33}).

\begin{figure}[h] \centerline{%
 \xymatrix{
\cA(G/e)\ar@{-->}[dr]^{\cA_{\cK}}\ar@/_2.0pc/@{->}@[black][ddr]_{\cA_{e,P}}\ar@/^2.0pc/@{->}@[black][rrd]^{\cA_{e,Q}} & & \\
& \cK\ar[r]^{\pi_2}\ar[d]_{\pi_1} & \cA(G/Q)\ar[d]^{\cA_{Q,G}} \\
& \cA(G/P)\ar[r]_{\cA_{P,G}} & \cA(G/G) 
 }}
\caption{The map $\cA_{\cK}$ exists due to universal property of pull-bakc}\label{Fig:33}
\end{figure}

With these notations in mind, we define the following.
\begin{definition}\label{defn: property I}(Property I)
    Let $\cA\in Vec^*_G$ where $G=C_{pq}$, $p,q$ distinct primes.  We say that $\cA$ satisfies \emph{Property I} if $\cA_{P,G}, \cA_{Q,G}$ and $\cA_{\cK}$ are surjective as morphism of vector spaces.  
\end{definition}

  \begin{theorem}\label{thm: injective C_pq}
        Let $G=C_{pq}$, where $p,q$ are distinct prime numbers and $\cA\in \sO_{G}[\DGA]$. Then $\cA$ as a member of $Vec_{G}^\ast$ is injective if $\cA$ satisfies Property I. \end{theorem}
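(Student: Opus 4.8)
The plan is to use the structural characterization of injective objects in $Vec^*_G$ from \Cref{sum inj}: a dual coefficient system is injective precisely when it decomposes as $\bigoplus_H \underline{V}_H$ with $\underline{V}_H(G/K) = \hom_{\QQ(N(H)/H)}(\QQ(G/H)^K, V_H)$. Since $G = C_{pq}$ is abelian, $N(H) = G$ for every $H$, so the Weyl groups are $G/H$ and the formula simplifies. The four subgroups are $e, P, Q, G$, and I would try to build the summands $V_e, V_P, V_Q, V_G$ directly out of the kernels appearing in \Cref{equation:24}: namely $V_G = \cA(G/G)$, $V_P = \ker \cA_{P,G}$, $V_Q = \ker \cA_{Q,G}$, and $V_e = \ker \cA_{e,P} \cap \ker \cA_{e,Q}$, which (by the pullback description in \Cref{Fig:3}) equals $\ker \cA_{\cK}$. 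The map $\cA \to \cI := \bigoplus_H \underline{V}_H$ of \Cref{equation:24} always exists and is injective; the whole content is to show that under Property~I it is also \emph{surjective}, hence an isomorphism, so that $\cA$ itself has the injective form.

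The key steps, in order: (1) Write out $\cI(G/K)$ explicitly at each of the four levels $K = e, P, Q, G$ using the $\hom$-formula; for instance $\underline{V}_H(G/G) = V_H^{G}$ (the invariants, but here the module structure is such that one just gets $V_H$ when $H=G$ and a controlled piece otherwise), and $\underline{V}_e(G/e) = V_e$ since $\QQ(G/e)^e = \QQ(G/e)$ is free of rank one over $\QQ[G]$. (2) Identify, level by level, the image of $\cA(G/K)$ in $\cI(G/K)$ under the canonical map and compare dimensions. At level $G/e$ the map $\cA(G/e) \to \cI(G/e) = V_e \oplus (\text{pieces from } V_P, V_Q, V_G)$ is, in coordinates, $x \mapsto (\text{class of } x \bmod \ker\cap\ker, \cA_{e,P}(x), \cA_{e,Q}(x), \cA_{P,G}\cA_{e,P}(x))$ up to the bookkeeping of the $\hom$-groups. (3) Surjectivity onto the $V_G$-summand follows from surjectivity of $\cA_{P,G}$ (or $\cA_{Q,G}$); surjectivity onto the $V_P$- and $V_Q$-summands follows because $\cA_{P,G}, \cA_{Q,G}$ being surjective lets one hit any prescribed value in $\cA(G/P)$ resp. $\cA(G/Q)$ while controlling the image in $\cA(G/G)$; and surjectivity onto the $V_e$-summand is exactly where $\cA_{\cK}$ surjective is used — given a target pair $(a,b) \in \cA(G/P) \times_{\cA(G/G)} \cA(G/Q) = \cK$, lift it to $\cA(G/e)$. (4) Conclude that $\cA \to \cI$ is an isomorphism of dual coefficient systems, so $\cA$ is injective by \Cref{sum inj}.

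The main obstacle I anticipate is the precise identification of the $\hom_{\QQ(G/H)}(\QQ(G/H)^K, V_H)$ terms at the mixed levels — e.g. $\underline{V}_P(G/Q)$, $\underline{V}_e(G/P)$, etc. — and checking that the canonical map $\cA \to \cI$ has the expected coordinate description there, so that a dimension count (or a direct diagram chase) actually closes. One has to be careful that $\QQ(G/H)^K$ for $H \ne K$ can vanish or be one-dimensional depending on divisibility of $|K|$ by $|H|$, and that the module structure over $\QQ(G/H) = \QQ[C_{pq}/H]$ interacts correctly with the kernels $V_H$ viewed as modules. Once the four local descriptions are pinned down, matching each $\cA(G/K)$ onto $\cI(G/K)$ is a finite, essentially linear-algebra verification, with Property~I feeding in exactly the three surjectivity inputs needed. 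A useful sanity check along the way is that when one of $\cA_{P,G}$, $\cA_{Q,G}$ fails to be surjective the argument must break — consistent with the paper's assertion (via \Cref{exmp: notinjectiveexample} and the converse direction) that surjectivity of all structure maps is necessary for injectivity.
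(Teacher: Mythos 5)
Your route is genuinely different from the paper's. The paper proves the theorem by verifying \Cref{defn:inj object} directly: given an inclusion $\cB\hookrightarrow\cC$ and a map $\alpha:\cB\to\cA$, it builds the extension $\beta$ level by level, using the vector-space splittings $\cA(G/P)=\cA(G/G)\oplus\ker\cA_{P,G}$, $\cA(G/Q)=\cA(G/G)\oplus\ker\cA_{Q,G}$ and $\cA(G/e)=\cK\oplus\ker\cA_{\cK}$ supplied by Property~I, extending the kernel components arbitrarily (every $\QQ$-vector space is injective) and then checking naturality by observing that all structure maps out of $G/e$ factor through $\cK$. You instead propose to show that $\cA$ is isomorphic, as a diagram, to the standard injective $\underline{V}_G\oplus\underline{V}_P\oplus\underline{V}_Q\oplus\underline{V}_e$ with $V_G=\cA(G/G)$, $V_P=\ker\cA_{P,G}$, $V_Q=\ker\cA_{Q,G}$, $V_e=\ker\cA_{\cK}$, and then invoke \Cref{sum inj}. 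This is a legitimate and arguably more structural route (it mirrors how the paper proves the converse, that injectivity forces surjective structure maps), and it yields an explicit normal form for $\cA$ rather than only the lifting property.

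Two points in your sketch need repair before it closes. First, the coordinate formula you give for the canonical map at $G/e$ is not correct as written: $\cA_{e,P}(x)$ lies in $\cA(G/P)$, not in $V_P$, and ``the class of $x$ modulo $\ker\cap\ker$'' lives in a quotient, not in $V_e\subset\cA(G/e)$; the components into $V_P,V_Q,V_e$ require chosen projections (i.e.\ chosen splittings), and compatibility of these choices with the structure maps is precisely the point that must be verified. Second, closing by a dimension count requires degreewise finite dimensionality, an assumption the paper never makes. Both issues disappear if you use Property~I to choose sections $s_P$ of $\cA_{P,G}$, $s_Q$ of $\cA_{Q,G}$ and $s_{\cK}$ of $\cA_{\cK}$, observe that $\cK\cong\cA(G/G)\oplus V_P\oplus V_Q$ via $(c,v,w)\mapsto(s_P(c)+v,\,s_Q(c)+w)$, and thereby write down explicit level-wise isomorphisms $\cA(G/e)\cong V_G\oplus V_P\oplus V_Q\oplus V_e$, $\cA(G/P)\cong V_G\oplus V_P$, $\cA(G/Q)\cong V_G\oplus V_Q$, $\cA(G/G)=V_G$, under which the structure maps of $\cA$ become the canonical projections (for instance $\cA_{e,P}\circ s_{\cK}(a,b)=a$ by the definition of $\cK$); this gives an isomorphism of diagrams onto $\underline{V}_G\oplus\underline{V}_P\oplus\underline{V}_Q\oplus\underline{V}_e$, and \Cref{sum inj} finishes the argument. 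Note also that hitting the $V_P$- and $V_Q$-summands at level $G/e$ already uses surjectivity of $\cA_{\cK}$ (lift the pair $(v,0)\in\cK$), not only of $\cA_{P,G}$ and $\cA_{Q,G}$, so all three conditions of Property~I enter at that level.
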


    \begin{proof}
We use \Cref{defn:inj object} to show $\cA$ is injective. Let us assume that we have a morphism of diagrams of $\DGA$s over $\sO_G$ given by
\[
\xymatrix{
\cB\ar@{^{(}->}[r]^{i}\ar[d]_{\alpha} & \cC\ar@{-->}^{\beta}[dl]\\
\cA
}
\]
where $\cB,\cC\in \sO_G[\DGA]$, $i$ is an inclusion and $\alpha$ is any morphism in this category. We show that there is a map $\beta$ which is an extension of $\alpha$.

Note that at $G/G$ we have a diagram of rational vector spaces, so $\beta(G/G)$ can be constructed easily. Next we construct $\beta(G/P)$. Consider the map $\beta': \cC(G/P)\to \cA(G/G) $ given by the composition $\cC(G/P) \to \cC(G/G) \xrightarrow{\beta(G/G)} \cA(G/G)$, where the first map is the structure map of the diagram $\cC$.

Now from the hypothesis it follows that as vector spaces $\cA(G/P)=\cA(G/G)\oplus V_P$, where $V_P=\ker \cA_{P,G}$. Now consider the composition $$ \cB(G/P)\xrightarrow{\alpha(G/P)} \cA(G/P)= \cA(G/G)\oplus V_P\xrightarrow{\pi}V_P  $$ and define the map $\theta: \cC(G/P)\to V_P$ extending $\pi \alpha(G/P)$. Define $\beta(G/P):=\beta'\oplus \theta$. By construction, $\beta(G/P)$ is compatible with the structure maps of $\cA$ and $\cC$ at the level $G/G$.

In a similar way we can define the map $\beta(G/Q)$ which satisfies this naturality condition. 

Finally, we construct the map $\beta(G/e)$. Consider the commutative diagram (\Cref{Fig:22}).

\begin{figure}[h] \centerline{%
 \xymatrix{
 & \cC(G/e)\ar[dl]\ar[dr] & & \\
\cC(G/P)\ar[dr]\ar[dd] &  & \cC(G/Q)\ar[dl]\ar[dd] \\
& \cC(G/G)\ar[dd] & & \\
\cA(G/P)\ar[dr] &  & \cA(G/Q)\ar[dl] \\
& \cA(G/G) & &
 }}
\caption{}\label{Fig:22}
\end{figure}

where the maps are either $\beta$ or structure maps. By the universal property of pullback, we have a map $\tilde{\beta}: \cC(G/e) \to \cK$. Also, from the hypothesis as vector spaces we have, $\cA(G/e)=\cK \oplus V_{\cK}$, where $V_{\cK}=\ker \cA_{\cK}$.

Consider the composition map $$\cB(G/e)\xrightarrow{\alpha(G/e)} \cA(G/e)=\cK \oplus V_{\cK}\xrightarrow{\pi_{\cK}} V_{\cK}$$
and define the map $\phi:\cC(G/e)\to V_{\cK}$ which is the extension of $\pi_{\cK}\alpha(G/e)$. Define $\beta(G/e):=\tilde{\beta}\oplus \phi$.

To check $\beta(G/e)$ is indeed part of the natural transformation, we must show that it commutes with the structure maps of $\cA$. Now any structure map $\cA(G/e)\to \cA(G/H)$, where $\{e\}\neq H$ subgroup of $G$, must factors through $\cK$ and therefore, the choice of splitting of $\cA(G/e)$ is not important. Upon examination of the \Cref{Fig:2}.

\begin{figure}[h] \centerline{%
 \xymatrix{
\cC(G/e)\ar[rr]\ar[d]_{\beta(G/e)}\ar[dr]^{\tilde{\beta}} && \cC(G/H)\ar[d]^{\beta(G/H)}\\ 
\cA(G/e)\ar[r]_{\cA_{\cK}} & \cK\ar[r] & \cA(G/H)
 }}
\caption{Naturality of $\beta$}\label{Fig:2}
\end{figure}


we see that $\beta$ commutes with all structure maps between values of the functor for which it is defined. Hence the result follows.

\end{proof}

Next, we show the following. 
\begin{prop}
   Let $G=C_{pq}$, where $p,q$ are distinct prime numbers and $\cA\in \sO_{G}[\DGA]$. If $\cA$ is injective as a member of $Vec^*_G$, then all the structure maps are surjective.
    
\end{prop}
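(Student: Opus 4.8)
The plan is to invoke the structure theorem for injective dual coefficient systems, \Cref{sum inj}, and then unwind what it says for $G=C_{pq}$. Since $\cA$ is injective in $Vec^\ast_G$, \Cref{sum inj} supplies $\QQ(N(H)/H)$-modules $V_H$, one for each subgroup $H\in\{e,P,Q,G\}$, together with an isomorphism $\cA\cong\bigoplus_H\underline{V}_H$ where $\underline{V}_H(G/K)=\hom_{\QQ(N(H)/H)}(\QQ(G/H)^K,V_H)$. It then suffices to show that every structure map of each summand $\underline{V}_H$ is surjective, since a direct sum of surjections is a surjection.

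The first step is the fixed-point computation. As $G$ is abelian we have $N(H)/H=G/H$, and a coset $gH\in G/H$ is fixed by $K$ exactly when $g^{-1}Kg=K\subseteq H$; hence $(G/H)^K=G/H$ if $K\leq H$ and $(G/H)^K=\emptyset$ otherwise. Consequently $\QQ(G/H)^K$ is either the free rank-one $\QQ(G/H)$-module $\QQ(G/H)$ (when $K\leq H$) or $0$ (otherwise), and evaluation at the identity coset identifies $\underline{V}_H(G/K)$ with $V_H$ when $K\leq H$ and with $0$ when $K\not\leq H$.

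The second step is to read off the structure maps. For $K\leq L$, the map $\underline{V}_H(\hat e_{K,L})$ is induced by precomposition with the inclusion $\QQ(G/H)^L\hookrightarrow\QQ(G/H)^K$. When $L\leq H$ this inclusion is the identity of $\QQ(G/H)$, so $\underline{V}_H(\hat e_{K,L})=\mathrm{id}_{V_H}$; when $L\not\leq H$ the target $\underline{V}_H(G/L)$ is $0$. In either case the map is surjective. Summing over $H$ gives $\cA(G/K)\cong\bigoplus_{H\geq K}V_H$, and for $K\leq L$ the structure map $\cA_{K,L}$ becomes the coordinate projection $\bigoplus_{H\geq K}V_H\twoheadrightarrow\bigoplus_{H\geq L}V_H$, which is patently surjective. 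Applying this to the four cases $(K,L)\in\{(e,P),(e,Q),(P,G),(Q,G)\}$ (and also $(e,G)$) yields the claim.

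There is no genuine obstacle; the one point that merits care is making sure that the structure maps of each $\underline{V}_H$ are honest projections rather than projections composed with some automorphism of $V_H$ coming from the $\QQ(N(H)/H)$-action — this is exactly the observation that when $K\leq L\leq H$ the inclusion $\QQ(G/H)^L\hookrightarrow\QQ(G/H)^K$ is an equality. I would also note in passing that the same argument applies verbatim to any finite abelian group, with $\{H\geq K\}$ read off from the subgroup lattice.
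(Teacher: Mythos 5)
Your proof is correct and follows essentially the same route as the paper: both arguments rest on \Cref{sum inj}, writing the injective diagram as a direct sum of the standard injectives $\underline{V}_H$ and observing that the resulting structure maps are coordinate projections (onto $\bigoplus_{H\geq L}V_H$), hence surjective. The only cosmetic difference is that the paper identifies the modules $V_H$ concretely via the injective-envelope construction (as $\cA(G/G)$ and the kernels $\ker\cA(\hat{e}_{H,G})$, $\ker\cA(\hat{e}_{K,G})$), whereas you keep them abstract, which, as you note, makes the same argument apply to any finite abelian group.
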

\begin{proof}

Let $\cA$ be injective. 
Then  the injective envelope of $\cA$ is isomorphic to $\cA$ and is given by $\sI(\cA)=\underline{\cI}_G^\ast \oplus \underline{\cI}_H^\ast \oplus \underline{\cI}_K^{\ast} \oplus \underline{\cI}^{\ast}_e$, where $\underline{\cI}_L^{\ast}$ are systems (see Equation \ref{equation:24} and \Cref{sum inj}) corresponding  to the vector spaces $I_L=\cap_{L\subset P}\ker\cA(\hat{e}_{L,P}) $ (here $L\subset P$ implies that $P$ properly contains a conjugate of $L$) for every proper subgroup $L$ of $G$ and $I_G=\cA(G/G)$.

From Equation \ref{equation:24} we have, $\underline{\cI}_X^{\ast}(G/K)= Hom_{\QQ(N(X)/X)}(\QQ(G/X)^K,I_X)$.

Thus we have,  
\[
\underline{\cI}^{\ast}_e\cong
\begin{cases}
    I_e, & \text{at G/e}\\
    0, & \text{ else.}
\end{cases}
\]
As $I_G=\cA(G/G)$ we get,
\[
\underline{\cI}^{\ast}_G\cong
\begin{cases}
     \cA(G/G), & \text{at G/e}\\
    \cA(G/G),  & \text{ at G/H}\\
    \cA(G/G), & \text{ at G/K}\\
    \cA(G/G), & \text{ at G/G}.
\end{cases}
\] 
\[
\underline{\cI}^{\ast}_H\cong
\begin{cases}
     I_H=\ker \cA(\hat{e}_{H,G}) , & \text{at G/e}\\
    I_H= \ker \cA(\hat{e}_{H,G}),  & \text{ at G/H}\\
    0, & \text{ at G/K}\\
    0, & \text{ at G/G}.
\end{cases}
\] 
\[
\underline{\cI}^{\ast}_K\cong
\begin{cases}
     I_K=\ker \cA(\hat{e}_{K,G}), & \text{at G/e}\\
    0,  & \text{ at G/H}\\
    I_K=\ker \cA(\hat{e}_{K,G}), & \text{ at G/K}\\
    0, & \text{ at G/G}.
\end{cases}
\] 
Thus the injective envelope for $\cA$ is given by

\[
\sI(\cA)\cong
\begin{cases}
    \cA(G/G)\oplus \ker \cA(\hat{e}_{H,G}) \oplus \ker \cA(\hat{e}_{K,G})\oplus I_e, & \text{at G/e}\\
   \cA(G/G)\oplus \ker \cA(\hat{e}_{H,G}),  & \text{ at G/H}\\
   \cA(G/G)\oplus \ker \cA(\hat{e}_{K,G}), & \text{ at G/K}\\
    \cA(G/G), & \text{ at G/G}.
\end{cases}
\] 

The structure maps are projections and hence surjective.

\end{proof}

We now consider the following example where the diagram of $\DGA$s is not injective but structure maps are all surjective.  

\begin{exmp}\label{exmp: notinjectiveexample}
Let $p,q$ be distinct primes and $P$ and $Q$ be subgroups of $C_{pq}$ of order $p$ and $q$ respectively. 
    Consider the $C_{pq}$-diagram of $\DGA$s
    
    \[
\cB:=
\begin{cases}
     \wedge(x), & \text{at G/e}\\
    \wedge(y),  & \text{ at G/P}\\
    \wedge(z), & \text{ at G/Q}\\
    \QQ, & \text{ at G/G}.
\end{cases}
\]
     given in \Cref{fig:nonexmp}. Here $\deg x=\deg y=\deg z=3$, and differentials are zero. The structure maps are given by $\cB^{e,P}(x)=y$, $\cB^{e,Q}(x)=z$, $\cB^{P,C_{pq}}(y)=0$ and $\cB^{Q,C_{pq}}(z)=0$ making all the structure maps surjective. One may check using \Cref{sum inj} that $\cB$ is not injective. Note that, $\cB$ does not satisfy Property I.    
    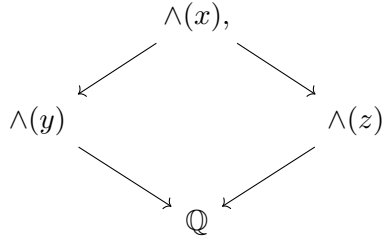
\begin{figure}[h!]
    \centering
\begin{tikzcd}
    & {\wedge(x), }\\
     {\wedge(y)} && {\wedge(z)} \\
     & \QQ\\
     &&&{}
     \arrow[from=1-2, to=2-1]
     \arrow[from=1-2, to=2-3]
     \arrow[from=2-1, to=3-2]
     \arrow[from=2-3, to=3-2]
\end{tikzcd}
\caption{An example of non-injective diagram}
    \label{fig:nonexmp}
\end{figure}

\end{exmp}

\begin{exmp}\label{exmp:1}
Let $T= (S^3\times S^3 \times S^3) \vee (S^5 \times S^5)$ with a $C_6$ action given by as follows: 
Let $H=<t>,K=<s>$ be subgroups of $C_6$ order $3,2$ respectively, $H$ acts on $(S^3\times S^3 \times S^3)$ by $t(x,y,z)=(y,z,x)$ and on $S^5\times S^5$ component the action is trivial. $K$ acts trivially on $(S^3\times S^3 \times S^3)$ and on $(S^5\times S^5)$ by $s(a,b)=(b,a)$. 

Thus we get $X^e=X$, $X^H=S^3 \vee (S^5\times S^5)$, $X^K=(S^3\times S^3 \times S^3)\vee S^5$ and $X^G=S^3 \vee S^5$.

The cohomology diagram is given by 
\[
\cA=
\begin{cases}
     \wedge (x_1,x_2,x_3,y_1,y_2)/D , & \text{at G/e}\\
    \wedge(x,y_1,y_2)/A,  & \text{ at G/H}\\
    \wedge(x_1,x_2,x_3,y)/B, & \text{ at G/K}\\
    \wedge(x,y)/C, & \text{ at G/G},
\end{cases}
\]

where $D=\langle x_iy_j~|~i=1,2,3 \text{ and } j=1,2\rangle$, 
$A=\langle xy_i~|~i=1,2\rangle$, 
$B=\langle x_iy~|~i=1,2,3 \rangle$, $C=\langle xy \rangle$.

The $x_i, y_j$ correspond to each copy of $S^3,S^5$ in $X^e$ and $x,y$ correspond to the copies of $S^3,S^5$ in $X^G$ respectively. The structure maps are defined in the following fashion: 
$\cA_{e,H}([x_i])=[x], \cA_{e,H}([y_i])=[y_i], \cA_{e,K}([x_i])=[x_i], \cA_{e,K}(y_i=y),\cA_{H,G}(][x])=[x], \cA_{H,G}([y_i])=[y], \cA_{K,G}([x_i])=[x], \cA_{K,G}([y])=[y]$. 

Note that the maps $\cA_{H,G}, \cA_{K,G}$ are surjective. Also the pullback of $\cA_{H,G},\cA_{K,G}$ is generated by the classes $([x],[x_i])$ and $([y_i],[y])$ and each of these elements has pre-image namely $[x_i]$ and $[y_i]$ in $\cA(G/e)$. Therefore, $\cA$ satisfies the hypothesis of \Cref{thm: injective C_pq} and hence $\cA$ is injective. Later, we check that $\cA$ also satisfies the hypothesis of \Cref{sum inj}, which also implies injectivity.
\end{exmp}

\begin{prop}
    Let $G=C_{pq}$, where $p,q$ are distinct primes, and $X$ be a $G$ space with the cohomology diagram  $\cA$ being injective. Then, the cohomology diagram of $X^Q$ as a $P$-space (similarly, the cohomology diagram of $X^P$ as a $Q$-space) is also injective. Moreover, the cohomology diagram of $X$ as a $C_p$-space (respectively $C_q$) is also injective.
\end{prop}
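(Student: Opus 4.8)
The plan is to reduce each assertion to \Cref{prop:inj C_p} or to \Cref{thm: injective C_pq} by identifying the relevant restricted cohomology diagram as a sub-diagram (on a full subcategory of $\sO_{C_{pq}}$) of $\cA$, and then checking the corresponding surjectivity hypotheses. First I would recall that for a $C_{pq}$-space $X$, the fixed point sets satisfy $(X^Q)^{P} = X^{PQ} = X^{G}$ and $(X^Q)^{e} = X^Q$, so the cohomology diagram of $X^Q$ viewed as a $P = C_p$-space is exactly the diagram
\[
\xymatrix{
\cA(G/Q) \ar[r]^{\cA_{Q,G}} & \cA(G/G),
}
\]
i.e.\ the restriction of $\cA$ to the full subcategory of $\sO_{C_{pq}}$ spanned by $G/Q$ and $G/G$. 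By \Cref{prop:inj C_p}, this $C_p$-diagram is injective precisely when $\cA_{Q,G}$ is surjective. But injectivity of $\cA$ forces all structure maps to be surjective by the preceding \Cref{prop} (the one computing $\sI(\cA)$ and showing the structure maps are projections); in particular $\cA_{Q,G}$ is surjective, so the $P$-diagram of $X^Q$ is injective. The argument for $X^P$ as a $Q$-space is identical with the roles of $P$ and $Q$ interchanged, using $\cA_{P,G}$.

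For the second assertion, I would note that $X$ regarded as a $C_p$-space (where $C_p$ is identified with the subgroup $P \le C_{pq}$) has fixed point sets $X^e = X^e$ and $X^P = X^P$, so its cohomology diagram over $\sO_{C_p}$ is
\[
\xymatrix{
\cA(G/e) \ar[r]^{\cA_{e,P}} & \cA(G/P).
}
\]
Again by \Cref{prop:inj C_p}, this is injective iff $\cA_{e,P}$ is surjective. To see this I would again invoke the explicit form of $\sI(\cA) \cong \cA$ from the preceding \Cref{prop}: at $G/e$ the value is $\cA(G/G) \oplus \ker\cA(\hat e_{P,G}) \oplus \ker\cA(\hat e_{Q,G}) \oplus I_e$, at $G/P$ it is $\cA(G/G)\oplus\ker\cA(\hat e_{P,G})$, and the structure map $\cA_{e,P}$ is the coordinate projection killing the $\ker\cA(\hat e_{Q,G})$ and $I_e$ summands; this is surjective. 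Hence the $C_p$-diagram of $X$ is injective, and symmetrically for $C_q$.

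The only genuinely delicate point is the bookkeeping: one must make sure that restricting the $\sO_{C_{pq}}$-diagram along the inclusion of the relevant full subcategory really does produce the cohomology diagram of the restricted action, and that ``injective as an object of $Vec^\ast_{C_p}$'' is detected exactly by the two-term criterion of \Cref{prop:inj C_p} rather than requiring anything about the (nonexistent, in the two-object orbit category) extra structure maps. Once that identification is in place, everything follows from the surjectivity of all structure maps of an injective $C_{pq}$-diagram, which was already established. I expect this step --- verifying that the restriction functor $\sO_{C_p}[\DGA] \hookrightarrow$ (restriction of) $\sO_{C_{pq}}$-diagrams is compatible with taking fixed points and with the notion of injectivity --- to be the main (though minor) obstacle, and the numerical verifications to be routine.
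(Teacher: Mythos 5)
Your proposal is correct and follows essentially the same route as the paper: identify the restricted cohomology diagrams as the two-term diagrams $\cA(G/Q)\to\cA(G/G)$ (using $(X^Q)^P=X^G$) and $\cA(G/e)\to\cA(G/P)$, deduce surjectivity of these structure maps from injectivity of $\cA$, and conclude by \Cref{prop:inj C_p}. If anything, your citation of the proposition computing $\sI(\cA)$ (injective $\Rightarrow$ all structure maps surjective) is the more accurate source for the surjectivity step than the reference the paper itself gives.
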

\begin{proof}
    The $C_{pq}$ diagram for $\cA$ is given as follows (see \Cref{fig:new}). 
        \begin{figure}[h]
            \centerline{
    \xymatrix{
    \cA(G/e)\ar[d]_{\cA_{e,P}}\ar[r]^{\cA_{e,Q}} & \cA(G/Q)\ar[d]^{\cA_{Q,G}}\\
    \cA(G/P)\ar[r]_{\cA_{P,G}} & \cA(G/G)
    }}
            \caption{$\cA$ as a $C_{pq}$-diagram of $\DGA$s }
            \label{fig:new}
        \end{figure}

    Note that $\cA(G/e)=H^*(X;\QQ), \cA(G/P)=H^*(X^P;\QQ),\cA(G/Q)=H^*(X^Q;\QQ), \cA(G/G)=H^*(X^G;\QQ)$. 
As $\cA$ is injective it follows from \Cref{thm: injective C_pq} that the structure maps are surjective. Now, if $x\in X^Q$ it follows that $q.x=x$ for all $q\in Q$. Also $(X^Q)^P=\{x\in X^Q~|~p.x=x~\text{for all } p\in P\}$. Therefore, as $x\in (X^Q)^P$ for any $g\in P\cup Q$, we have $g.x=x$. In particular, $g.x=x$ for all $g\in G$. Hence it follows $(X^Q)^P=X^G$ and the cohomology diagram for $X^Q$ is given by $\cA_{Q,G}:\cA(G/Q)\to \cA(G/G)$ which is subjective. By \Cref{prop:inj C_p} the result follows.

Next, consider $X$ as a $P=C_p$-space. The cohomology diagram is given by $\cA_{e,P}:H^*(X;\QQ)\to \H^*(X^P;\QQ)$, which is surjective. Hence the result follows. 
\end{proof}

\section{ Combination theorems}\label{section: combnation theorem}

In this section, we prove the combination theorems as promised in \Cref{section:intro}. We start with the following definitions.

\begin{definition}[Wedge product of $C_p$ and $C_q$ space]\label{defn: equiwedgespace}
Let $X$ (respectively $Y$) be $C_p$-space (resectively $C_q$-space). Let $X^{C_p}$ and $Y^{C_q}$ are corresponding fixed point spaces. We define the $C_{pq}$-space as follows 
\[
X\vee Y:=
\begin{cases}
    X\vee Y, & \text{at G/e}\\
    X^{C_p}\vee Y, & \text{at G/P}\\
    X\vee Y^{C_q}, & \text{at G/Q}\\
    
    X^{C_p}\vee Y^{C_q} 
    ,  & \text{ at G/G}.
\end{cases}
\] 

The structure maps are induced from the structure maps of $X,Y$ as $C_p$ and $C_q$-spaces.

\end{definition}

Using \Cref{defn:wedge of DGAs} one can define the following.

\begin{definition}[Wedge product of diagram of $\DGA$s]\label{defn:wedgeequivar}
    
Let $\cU_1\in \sO_{C_p}[\DGA]$ and $\cU_2\in \sO_{C_q}[\DGA]$s. Then one can combine these diagrams to get a $C_{pq}$-diagram of $\DGA$s, $\cU_1\vee \cU_2$, defined as 

\[
\cU_1\vee \cU_2:=
\begin{cases}
    \cU_1(C_p/e)\vee \cU_2(C_q/e), & \text{at G/e}\\
    \cU_1(C_p/C_p)\vee \cU_2(C_q/e), & \text{at G/P}\\
    \cU_1(C_p/e)\vee \cU_2(C_q/C_q), & \text{at G/Q}\\
    
    \cU_1(C_p/C_p)\vee \cU_2(C_q/C_q) 
    ,  & \text{ at G/G}.
\end{cases}
\] 

where the wedge is defined as in \Cref{defn:wedge of DGAs} and the structure maps are induced from the structure maps of $\cU_1$ and $\cU_2$ see \Cref{fig:eqwedge1}.
\begin{figure}
    \centering
\begin{tikzcd}
    & {\cU_1(C_p/e)\vee \cU_2(C_q/e), }\\
     {\cU_1(C_p/C_p)\vee \cU_2(C_q/e)} && {\cU_1(C_p/e)\vee \cU_2(C_q/C_q)} \\
     & \cU_1(C_p/C_p)\vee \cU_2(C_q/C_q)\\
     &&&{}
     \arrow["\cU_1^{e,C_p}\vee id"', from=1-2, to=2-1]
     \arrow["id \vee \cU_2^{e,C_q}",from=1-2, to=2-3]
     \arrow["id \vee \cU_2^{e,C_q}"',from=2-1, to=3-2]
     \arrow["\cU_1^{e,C_p}\vee id",from=2-3, to=3-2]
\end{tikzcd}
\caption{Diagram for $\cU_1\vee \cU_2$}
    \label{fig:eqwedge1}
\end{figure}

\end{definition}

\begin{prop}\label{prop: combination injective}
    Let $\cU_1,\cU_2$ be injective diagram of $\DGA$s over the groups $C_p,C_q$ respectively. Then $\cU_1\vee \cU_2$ is injective as a member of $Vec^*_{C_{pq}}$. 
\end{prop}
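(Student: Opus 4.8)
The strategy is to verify Property~I for the $C_{pq}$-diagram $\cU_1\vee\cU_2$ and then invoke \Cref{thm: injective C_pq}. Recall that Property~I requires three maps to be surjective as maps of vector spaces: the two structure maps $(\cU_1\vee\cU_2)_{P,G}$ and $(\cU_1\vee\cU_2)_{Q,G}$ into the top corner, and the comparison map $\cA_{\cK}\colon (\cU_1\vee\cU_2)(G/e)\to\cK$ into the pullback of those two structure maps. First I would unwind the definitions: by \Cref{defn:wedgeequivar}, $(\cU_1\vee\cU_2)_{P,G}$ is the map $\cU_1^{e,C_p}\vee\,\mathrm{id}\colon \cU_1(C_p/C_p)\vee\cU_2(C_q/e)\to\cU_1(C_p/C_p)\vee\cU_2(C_q/C_q)$, wait — let me recompute — it is $\mathrm{id}\vee\cU_2^{e,C_q}$ on the leg $G/P\to G/G$. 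Since $\cU_1$ is injective over $\sO_{C_p}$, by \Cref{prop:inj C_p} the structure map $\cU_1^{e,C_p}$ is surjective; similarly $\cU_2^{e,C_q}$ is surjective. On a wedge $A\vee B$, a map of the form $f\vee g$ is surjective whenever $f$ and $g$ are (this is immediate from the description of $A\vee B$ as $(A\oplus B)/\langle 1_A-1_B\rangle$), so both structure maps into $G/G$ are surjective. This handles the first two conditions.

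The main work is the third condition: surjectivity of $\cA_{\cK}$. Here $\cK$ is the pullback in $\DGA$s of $(\cU_1\vee\cU_2)(G/P)\to(\cU_1\vee\cU_2)(G/G)\leftarrow(\cU_1\vee\cU_2)(G/Q)$, i.e. of
\[
\cU_1(C_p/C_p)\vee\cU_2(C_q/e)\ \xrightarrow{\ \mathrm{id}\vee\cU_2^{e,C_q}\ }\ \cU_1(C_p/C_p)\vee\cU_2(C_q/C_q)\ \xleftarrow{\ \cU_1^{e,C_p}\vee\mathrm{id}\ }\ \cU_1(C_p/e)\vee\cU_2(C_q/C_q),
\]
and $\cA_{\cK}$ is the canonical map from $\cU_1(C_p/e)\vee\cU_2(C_q/e)$. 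The plan is to compute this pullback explicitly using the wedge structure. Because in $A\vee B$ the positive-degree parts of $A$ and $B$ multiply to zero, the wedge decomposes additively as $\QQ\oplus\overline{A}\oplus\overline{B}$, and a pullback of wedges along maps of the form $f\vee g$ decomposes (in each corner) as the direct sum of $\QQ$ with the pullback of the $\overline{A}$-parts and the pullback of the $\overline{B}$-parts. Concretely I expect $\cK\cong \QQ\oplus\overline{\cU_1(C_p/e)}\oplus\big(\overline{\cU_2(C_q/e)}\times_{\overline{\cU_2(C_q/C_q)}}\overline{\cU_2(C_q/C_q)}\big)$ — but the second pullback, over the surjection $\overline{\cU_2^{e,C_q}}$ and the identity, is just $\overline{\cU_2(C_q/e)}$ itself; and symmetrically the $\cU_1$-leg contributes $\overline{\cU_1(C_p/e)}$. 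Hence $\cK\cong\QQ\oplus\overline{\cU_1(C_p/e)}\oplus\overline{\cU_2(C_q/e)}\cong\cU_1(C_p/e)\vee\cU_2(C_q/e)=(\cU_1\vee\cU_2)(G/e)$, and under this identification $\cA_{\cK}$ is the identity, in particular surjective. I should double-check that the canonical map $\cA_{\cK}$ really corresponds to the identity under this identification — this follows by checking compatibility with $\cU_1^{e,C_p}$ and $\cU_2^{e,C_q}$ against the universal property, which is routine since both projections $\pi_1,\pi_2$ become, under the decomposition, the wedge of $\cU_1^{e,C_p}$ (resp.\ $\mathrm{id}$) with $\mathrm{id}$ (resp.\ $\cU_2^{e,C_q}$).

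With all three conditions of Property~I verified, \Cref{thm: injective C_pq} gives that $\cU_1\vee\cU_2$ is injective as an object of $Vec^*_{C_{pq}}$, which is the claim. The step I expect to be the main obstacle is the explicit identification of the pullback $\cK$: one must be careful that the cross-terms $\overline{A}\cdot\overline{B}=0$ really do make the pullback of wedges split as claimed, and that taking the pullback in $\DGA$s (with its differential and multiplication) rather than just in graded vector spaces does not introduce anything new — but since injectivity in \Cref{thm: injective C_pq} is checked at the level of $Vec^*_{C_{pq}}$ and $\cA_{\cK}$ need only be surjective as a map of vector spaces, it suffices to run the argument in graded vector spaces, where the decomposition is transparent. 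An alternative, should the explicit computation prove cumbersome, is to check injectivity directly via \Cref{sum inj} by exhibiting $\cU_1\vee\cU_2$ as $\bigoplus_H\underline{V}_H$ for suitable $\QQ(N(H)/H)$-modules built from the injective decompositions of $\cU_1$ and $\cU_2$; but the Property~I route is cleaner and reuses \Cref{thm: injective C_pq} directly.
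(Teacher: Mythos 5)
Your proposal is correct and takes essentially the same route as the paper: verify Property~I by deducing surjectivity of the two structure maps into $G/G$ from \Cref{prop:inj C_p}, then identify the pullback $\cK$ explicitly and conclude via \Cref{thm: injective C_pq}. The paper phrases the pullback computation by writing its elements as $((\cU_1^{e,C_p}(a),b),(a,\cU_2^{e,C_q}(b)))$ with preimage $(a,b)$ under $\cA_{\cK}$, which is exactly your identification $\cK\cong(\cU_1\vee\cU_2)(G/e)$, so the two arguments coincide.
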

\begin{proof}
We have the following commutative diagram (see \Cref{fig:eqwedge1}) for $\cU_1\vee \cU_2$ using \Cref{defn:wedgeequivar}. 

We show that $\cU_1\vee \cU_2$ satisfies Property I, and the proof follows from \Cref{thm: injective C_pq}. As $\cU_1,\cU_2$ are injective it follows from \Cref{prop:inj C_p} that the structure maps $\cU_1^{e,C_p}$ and $\cU_2^{e,C_q}$ are surjective. Hence the maps $id \vee \cU_2^{e,C_q}$ and $\cU_1^{e,C_p}\vee id$ are surjections. 

The pullback $\DGA$ for the maps $id \vee \cU_2^{e,C_q}$ and $id \vee \cU_1^{e,C_p}$ is a subset of $\cU_1(C_p/C_p)\vee \cU_2(C_q/e) \times \cU_1(C_p/e)\vee \cU_2(C_q/C_q)$ and consists of elements $((a',b),(a,b'))$ such that $$id\vee \cU_2^{e,C_q}(a',b)=\cU_1^{e,C_p}\vee id (a,b')$$ which implies $a'=\cU_1^{e,C_p}(a)$ and $b'=\cU_2^{e,C_q}(b)$.
Hence the elements are 
of type $((\cU_1^{e,C_p}(a),b),(a,\cU_2^{e,C_q}(b)))$ where $a\in \cU_1(C_p/e)$, $b\in \cU_2(C_q/e)$. For such an element we have a pre-image $(a,b)\in \cU_1(C_p/e)\vee \cU_2(C_q/e)$. Hence $\cU_1\vee \cU_2$ satisfies Property I and hence injective.

\end{proof}

\begin{theorem}\label{thm:combmain}
    Let $\cA_1,\cA_2$ be injective diagrams of graded algebras over $C_p$ and $C_q$, respectively. If the structure maps of $\cA_1$ and $\cA_2$ are retract then the minimal model of $\cA_1\vee \cA_2$
    is level-wise minimal.
    \end{theorem}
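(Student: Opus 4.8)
The plan is to combine \Cref{prop: combination injective}, which already tells us $\cA_1 \vee \cA_2$ is injective in $Vec^*_{C_{pq}}$, with an analysis of each stage of the minimal model construction, in the spirit of \Cref{Z_P injimpliesminimal}. First I would observe that since the structure maps of $\cA_1$ and $\cA_2$ are retracts of $\DGA$s, the induced structure maps of $\cA_1 \vee \cA_2$, namely $\cA_1^{e,C_p} \vee \mathrm{id}$ and $\mathrm{id} \vee \cA_2^{e,C_q}$, are themselves retracts (this is the lemma on wedges of retractions proved in the excerpt). Moreover the composite structure map $\cA(G/e) \to \cA(G/G)$ and the map $\cA_{\cK}$ into the pullback are retracts as well: the pullback $\cK$ of two retractions receives a retraction from $\cA_1(C_p/e)\vee\cA_2(C_q/e)$ via $(a,b)\mapsto((\cA_1^{e,C_p}(a),b),(a,\cA_2^{e,C_q}(b)))$ with one-sided inverse given by the section $(i_1\vee\mathrm{id})$ and $(\mathrm{id}\vee i_2)$ assembled through the universal property. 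So the whole diagram $\cA_1\vee\cA_2$ has all of its structure maps (including the diagonal $\cA(G/e)\to\cA(G/G)$ and $\cA_{\cK}$) admitting $\DGA$ retractions.

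Next I would set up the inductive hypothesis over the stages of the minimal model $\rho\colon \sM \to \cA_1\vee\cA_2$. Writing $\sM = \bigcup_n \sM_n$ with $\sM_n = \sM_{n-1}^{\gamma}(\u V)$, I want to show each $n$-th stage associated diagram of vector spaces $\u V$ (\Cref{def:associ:vect}) is injective in $Vec^*_{C_{pq}}$, which by \Cref{thm: injective C_pq} reduces to verifying Property I for $\u V$: surjectivity of $\u V$ at the three relevant structure maps together with surjectivity of the induced map into the pullback. Because each $\sM_n(G/H)$ is non-equivariantly minimal by construction and $\cA_1\vee\cA_2$ has zero differential at $G/e$, the maps $\rho(G/H)$ are surjective quasi-isomorphisms, and the retractions on $\cA$ lift (via \Cref{lemma: minimal surjective} and the Lifting Lemma, exactly as in \Cref{Z_P injimpliesminimal}) to $\DGA$ inclusions $\sM(G/G)\hookrightarrow\sM(G/P)$, $\sM(G/G)\hookrightarrow\sM(G/Q)$, $\sM(G/P)\hookrightarrow\sM(G/e)$, $\sM(G/Q)\hookrightarrow\sM(G/e)$, with the structure maps of $\sM$ being the corresponding retractions. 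The wedge structure is crucial here: an element of $\sM(G/e)$ or of the relevant tensor factor decomposes as a sum of an $\cA_1$-type piece, an $\cA_2$-type piece, and products, and the $C_p$-retraction handles the first while the $C_q$-retraction handles the second, with no cross terms to worry about since $\bar A\cdot\bar B = 0$ in a wedge.

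Then, stage by stage, I would run the case analysis that $\u V(G/P) \to \u V(G/G)$, $\u V(G/Q)\to\u V(G/G)$, and $\u V(G/e)\to$ (pullback of the former two) are surjective: a class $[x]\in\u V(G/H)$ is represented by a polynomial in generators of $\sM_{n-1}$, of $\cA$, and of the summands added in the resolution; applying the lifted $\DGA$ inclusions $j$ (respectively the maps on the added injective summands, which are surjective since those summands are themselves injective diagrams) produces a preimage in $\u V$ at the lower orbit level whose differential and $\rho$-image again land where they should, so it represents a class in $\u V$ at that level. This is where the three-level bookkeeping is genuinely more involved than the $C_p$ case and is the main obstacle: I must check that the lifted maps are simultaneously compatible with \emph{both} structure maps out of $G/e$ and hence factor through the pullback $\cK$, i.e.\ that the section into $\cK$ is a $\DGA$-map and that the induced $\u V(G/e)\to \u V(G/P)\times_{\u V(G/G)}\u V(G/Q)$ is onto — this uses that $\cA_{\cK}$ is a retraction, which is precisely the extra input beyond surjectivity. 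Having established Property I for each $\u V$, \Cref{thm: injective C_pq} gives injectivity of every associated diagram, and then the argument at the end of \Cref{Z_P injimpliesminimal} applied levelwise (together with \Cref{lemm:13.57}) shows $\sM(G/H)$ is the non-equivariant minimal model of $(\cA_1\vee\cA_2)(G/H)$ for every $H$, i.e.\ $\sM$ is level-wise minimal.
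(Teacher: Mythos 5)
Your strategy is genuinely different from the paper's: you propose to analyze the \emph{abstract} minimal model of $\cA_1\vee\cA_2$ and transplant the proof of \Cref{Z_P injimpliesminimal} by lifting retractions through the $\rho$'s. The paper never does this. It instead builds an explicit candidate model: it takes the level-wise minimal models $\sM_1,\sM_2$ of $\cA_1,\cA_2$ supplied by \Cref{Z_P injimpliesminimal}, forms $\sM_1\vee\sM_2$, replaces each level $\wedge V\vee\wedge W$ by its non-equivariant minimal model $\wedge(V\oplus W\oplus K_{V,W})$ with structure maps induced by the surjections $\pi_V,\pi_W$ (using \Cref{lemma: minimal surjective}), shows this diagram $\sM$ is level-wise quasi-isomorphic to $\sM_1\vee\sM_2$ and hence to $\cA_1\vee\cA_2$, and finally proves $\sM$ is a minimal system by verifying Property I for each $n$-th stage associated diagram (\Cref{def:associ:vect}) through an explicit description of the pullback $R_n$ and the identity $K_{V',W}\cap K_{V,W'}=K_{V',W'}$ inside $K_{V,W}$.

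The gap in your proposal is exactly the step you yourself flag as ``the main obstacle'': surjectivity of $\u V(G/e)$ onto the pullback of $\u V(G/P)$ and $\u V(G/Q)$ over $\u V(G/G)$ at every stage, which is the genuinely new content beyond the $C_p$ case, and it is asserted rather than proved. The lifting arguments you invoke produce $\DGA$ sections $j$ that are compatible with $\rho$ and with \emph{one} structure map, but they commute with the remaining structure maps only up to homotopy; without strictly commuting sections over $G/G$ you cannot assemble $j_{P,e}$ and $j_{Q,e}$ to hit a prescribed \emph{compatible pair} of classes simultaneously, so the case analysis does not close as stated. (Your observation that $\cA_{\cK}$ is essentially an isomorphism for a wedge is correct, but transferring this to the minimal model is precisely what requires the strict compatibility you have not established.) The paper sidesteps this difficulty because in its explicit model the structure maps are literal projections: an element of $R_n$ has the form $((v',w,z+t),(v,w',z+s))$ with $t\in K_{V',W}/K_{V',W'}$, $s\in K_{V,W'}/K_{V',W'}$, and the element $(v',w',z+t+s)$ is an explicit preimage. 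Some substitute for this concrete computation (or an argument that the homotopies can be rectified to strict maps of diagrams) is what your outline still needs in order to be a proof; granting it, your concluding step (injective associated diagrams imply no higher resolution terms, hence level-wise minimality, as in \Cref{Z_P injimpliesminimal}) is fine.
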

\begin{proof}
    First note that $\cA_1\vee \cA_2$ is injective due to \Cref{prop: combination injective}, so we do not need to take the injective envelope of $\cA_1\vee \cA_2$ to compute its minimal model. 
    Let $\rho_1:\sM_1\to \cA_1$ and $\rho_2:\sM_2\to \cA_2$ be the minimal models. Then using \Cref{Z_P injimpliesminimal} we have up to isomorphism,

\begin{alignat*}{2}
&\begin{aligned} & \sM_1=
    \begin{cases}
       \wedge V & \text{at~} C_p/e\\ 
       \wedge V' & \text{at~} C_p/C_p
    \end{cases}
    \end{aligned}
    & \hskip 6em
    &\begin{aligned} & \sM_2=
    \begin{cases}
       \wedge W & \text{at~} C_q/e\\ 
       \wedge W' & \text{at~} C_q/C_q
    \end{cases}
     \end{aligned}
\end{alignat*}

    where the structure maps $\sM_1^{e,C_p}:\wedge V\to \wedge V' , \sM_2^{e,C_q}:\wedge W\to \wedge W'$ are surjections (infact they are retractions follows from the proof of \Cref{Z_P injimpliesminimal}) and $\rho_1(C_p/e):\wedge V \to \cA_1(C_p/e)$, $\rho_1(C_p/C_p):\wedge V' \to \cA_1(C_p/C_p)$, $\rho_2(C_q/e):\wedge W \to \cA_2(C_q/e)$, $\rho_2(C_q/C_q):\wedge W' \to \cA_2(C_q/C_q)$ are the non-equivariant minimal models (due to \Cref{Z_P injimpliesminimal}).

    It is not hard to check that the following \Cref{fig: M1M2andA} commutes, where the red arrows are induced from $\rho_1,\rho_2$. Indeed, the diagram \Cref{fig: M1M2andA} can be thought of wedge of two diagrams $\rho_1:\sM_1\to \cA_1$ and $\rho_2:\sM_2\to \cA_2$.   

\begin{figure}[h!]
    \centering
\begin{tikzcd}
    & {\wedge V \vee \wedge W}\\
    {\wedge V' \vee \wedge W} && {\wedge V \vee \wedge W'}\\
    & {\wedge V'\vee \wedge W'}\\
    & {\cA_1(C_p/e)\vee \cA_2(C_q/e), }\\
     {\cA_1(C_p/C_p)\vee \cA_2(C_q/e)} && {\cA_1(C_p/e)\vee \cA_2(C_q/C_q)} \\
     & \cA_1(C_p/C_p)\vee \cA_2(C_q/C_q)\\
     &&&{}
     \arrow["\sM_1^{e,C_p}\vee id"', from=1-2, to=2-1]
     \arrow["id \vee \sM_2^{e,C_q}",from=1-2, to=2-3]
     \arrow["id \vee \sM_2^{e,C_q}"',from=2-1, to=3-2]
     \arrow["\sM_1^{e,C_p}\vee id",from=2-3, to=3-2]
     \arrow["\cA_1^{e,C_p}\vee id"',from=4-2, to=5-1]
     \arrow["id \vee\cA_2^{e,C_q}",from=4-2, to=5-3]
     \arrow["id \vee \cA_2^{e,C_q}"',from=5-1, to=6-2]
     \arrow["\cA_1^{e,C_p}\vee id",from=5-3, to=6-2]
     \arrow[bend left=60,swap, red, from=1-2, to=4-2]
     \arrow[red, from=2-1, to=5-1]
     \arrow[bend right=80,swap,red, from=3-2, to=6-2]
     \arrow[red, from=2-3, to=5-3]
\end{tikzcd}
\caption{Top square indicates $\sM_1\vee \sM_2$, bottom square indicates $\cA_1\vee \cA_2$, red arrows are induced from $\rho_1,\rho_2$}
    \label{fig: M1M2andA}
\end{figure}
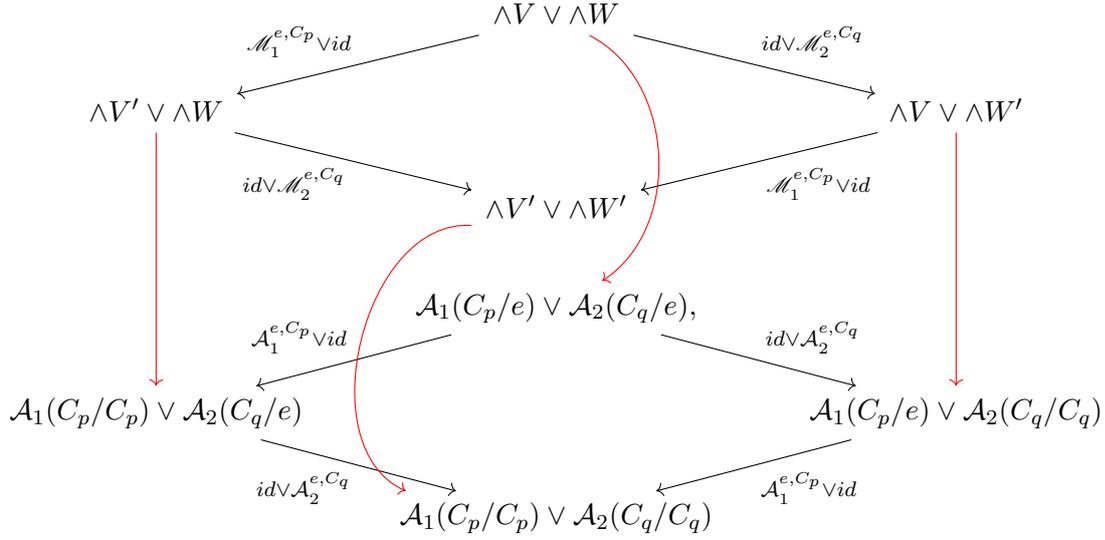

Therefore, the injective diagram (due to \Cref{prop: combination injective}) of $\DGA$s $\sM_1\vee \sM_2$, $\cA_1\vee \cA_2$ are weakly equivalent. As the structure maps of $\sM_1,\sM_2$ are surjections it follows from \Cref{lemma: minimal surjective} that $\pi_V:V\to V'$ and $\pi_W:W\to W'$ are also surjections. 

The minimal model for the wedge sum $\wedge V \vee \wedge W$ is given by the DGA
\[
\wedge(V \oplus W \oplus K_{V,W}),
\]
where $K_{V,W} = \bigoplus_{i \geq 1} K^i_{V,W}$ is a graded vector space constructed inductively as follows:

\begin{itemize}
    \item \textbf{Step 1:} For each pair $v \in V$ and $w \in W$ such that $d(vw) = 0$ in $\wedge(V \oplus W)$, introduce a generator $k \in K^1_{V,W}$ with differential $d(k) = vw$.
    
    \item \textbf{Step 2:} For each $k \in K^1_{V,W}$ and $v \in V$ (respectively, $w \in W$), if $d(kv) = 0$ (respectively, $d(kw) = 0$) in $\wedge(V \oplus W \oplus K^1_{V,W})$, then introduce generators $k_1$, $k_2 \in K^2_{V,W}$ with $d(k_1) = kv$ and $d(k_2) = kw$.
    
    \item \textbf{Inductive Step:} Continue this process: for each $i \geq 1$, if elements in $K^i_{V,W}$ have products with elements in $V$ or $W$ that are closed, introduce generators in $K^{i+1}_{V,W}$ whose differentials kill those closed elements to get rid of product terms in cohomology.
\end{itemize}

This inductive construction ensures that the resulting DGA is minimal and models the rational homotopy type of the wedge sum.

Similarly, for $\wedge V'\vee \wedge W, \wedge V\vee \wedge W', \wedge V'\vee \wedge W'$ the minimal models are given by $\wedge (V'\oplus W\oplus K_{V',W}), \wedge (V\oplus W'\oplus K_{V,W'}), \wedge (V'\oplus W'\oplus K_{V',W'})$ respectively. Consider the following \Cref{fig: justM} of $\DGA$s, $\sM$, where the structure maps of $\sM$ are induced from the surjections $\pi_V:V\to V'$ and $\pi_W:W\to W'$. 
\begin{figure}[h!]
    \centering
\begin{tikzcd}
    & {\wedge(V \oplus W\oplus K_{V,W}) }\\
     {\wedge (V'\oplus W\oplus K_{V',W})} && {\wedge (V\oplus W'\oplus K_{V,W'})} \\
     & \wedge (V'\oplus W'\oplus K_{V',W'})\\
     &&&{}
     \arrow[ from=1-2, to=2-1]
     \arrow[from=1-2, to=2-3]
     \arrow[from=2-1, to=3-2]
     \arrow[from=2-3, to=3-2]
\end{tikzcd}
\caption{Diagram for $\sM$}
\label{fig: justM}
\end{figure}

Note that the maps $K_{V,W}\to K_{V',W}$ are also surjections. Indeed, if $x\in K^1_{V',W}$ then $dx=a'.b$ where $a'\in V'$ and $b\in W$, with the product $a'b$ a cocycle, then as the map $\wedge V\to \wedge V'$ is a retract (follows from \Cref{Z_P injimpliesminimal}) there is a map $i:\wedge V'\to \wedge V$ an inclusion of $\DGA$s so that one can get a preimage of $a'$, say $i(a')=a\in V$, $d(ab)=d(i(a'b))=id(a'b)=0$ and hence there exists $k\in K^1_{V,W}$ such that $dk=ab$ and such $k$ is a pre-image of $x$. A similar thing happens for the subspaces $K^i_{V',W}$.

This implies that the following \Cref{fig:MMM} commutes. 
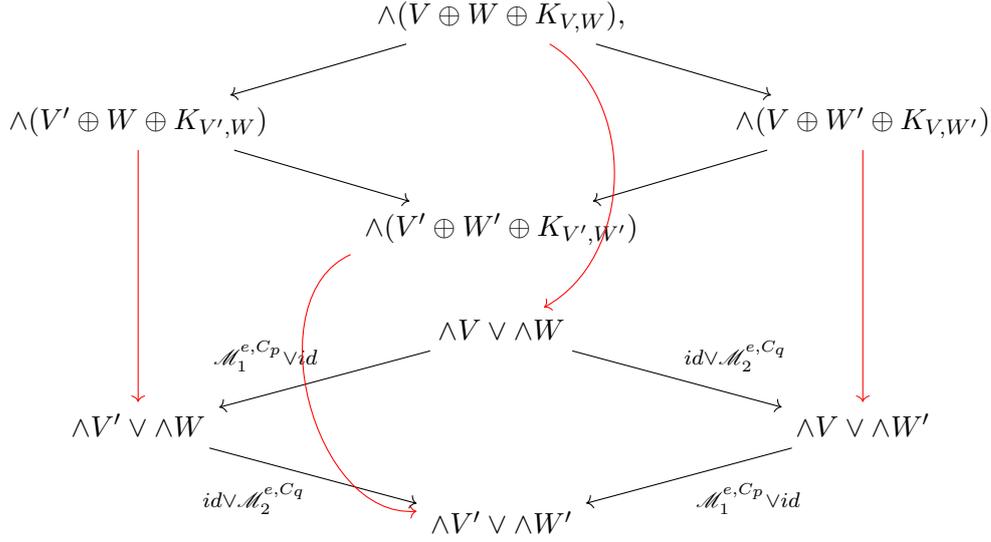
\begin{figure}[h!]
    \centering
\begin{tikzcd}
 & {\wedge(V \oplus W\oplus K_{V,W}), }\\
     {\wedge (V'\oplus W\oplus K_{V',W})} && {\wedge (V\oplus W'\oplus K_{V,W'})} \\
     & \wedge (V'\oplus W'\oplus K_{V',W'})\\
    & {\wedge V \vee \wedge W}\\
    {\wedge V' \vee \wedge W} && {\wedge V \vee \wedge W'}\\
    & {\wedge V'\vee \wedge W'}\\
 &&&{}
     \arrow[ from=1-2, to=2-1]
     \arrow[from=1-2, to=2-3]
     \arrow[from=2-1, to=3-2]
     \arrow[from=2-3, to=3-2]
     \arrow["\sM_1^{e,C_p}\vee id"',,from=4-2, to=5-1]
     \arrow["id \vee \sM_2^{e,C_q}",from=4-2, to=5-3]
     \arrow["id \vee \sM_2^{e,C_q}"',from=5-1, to=6-2]
     \arrow["\sM_1^{e,C_p}\vee id",from=5-3, to=6-2]
     \arrow[bend left=60,swap, red, from=1-2, to=4-2]
     \arrow[red, from=2-1, to=5-1]
     \arrow[bend right=80,swap,red, from=3-2, to=6-2]
     \arrow[red, from=2-3, to=5-3]
\end{tikzcd}
\caption{Top square corresponds to $\sM$ and the bottom square corresponds to $\sM_1\vee \sM_2$}
\label{fig:MMM}
\end{figure}
The red arrows from $V\to V,V'\to V',W\to W, W'\to W'$ are identity, and the vector spaces $K_{V,W},K_{V',W},K_{V,W'},K_{V',W'}$ and the product $vw,v'w,vw',v'w'$ for $v\in V,v'\in V' w\in W,w'\in W'$ are mapped to zero making each red arrows a quasi-isomorphism and hence $\sM\to \sM_1\vee \sM_2$ is a weak equivalence.

We claim that the diagram of $\DGA$s, $\sM$, is the equivariant minimal model for $\cA_1\vee \cA_2$. To prove the claim, we show that the associated diagram of vector spaces (see \Cref{def:associ:vect}) is injective for every $n$, i.e., if $\sM_n=\sM_{n-1}(\underline{V})$ as in the \Cref{def:ele}, then $\underline{V}$ is injective. Note that for $n=0$, we have 
\[
\sM_0=\begin{cases}
    \QQ, \text{~at~} G/e, G/P, G/Q,G/G.\\ 
\end{cases}
\]
Therefore, we have the following \Cref{fig:Mo}. The pull-back corresponding to the maps $\sM_0^{P,G},\sM_0^{Q,G}$ is isomorphic to the diagonal subspace of $\QQ\times \QQ$ and is isomorphic to $\QQ$. Therefore, the map induced by universal property of pullback from $\sM_0(G/e)$ to this pullback space is isomorphism and hence surjection. Hence $\sM_0$ satisfies Property I. 

\begin{figure}[h!]
    \centering
\begin{tikzcd}
    & {\sM_0(G/e)=\QQ }\\
     {\sM_0(G/P)=\QQ} && {\sM_0(G/Q)=\QQ} \\
     & \sM_0(G/G)=\QQ)\\
     &&&{}
     \arrow["\sM_0^{e,P}=id"', from=1-2, to=2-1]
     \arrow["\sM_0^{e,Q}=id", from=1-2, to=2-3]
     \arrow["\sM_0^{P,G}=id"',from=2-1, to=3-2]
     \arrow["\sM_0^{Q,G}=id", from=2-3, to=3-2]
\end{tikzcd}
\caption{Diagram for $\sM$}
\label{fig:Mo}
\end{figure}

Inductively, assume that the associated systems of vector spaces are injective for degree $\leq n$.

We now prove the $n$th stage of induction.
    
Without loss assume that $\pi_V:V\to V'$ and $\pi_W:W\to W'$ are projections. These maps induce projections $\pi_V^n:V_n\to V'_n$ and $\pi_W^n:W_n\to W'_n$ for every $n$, where the vector space $V_n $ (respectively, $V'_n,W_n,W'_n$) consists of vectors homogeneous in degree $n$. As in \Cref{fig: justM} all the arrows are surjections using \Cref{lemma: minimal surjective} it follows that all the arrows in the following \Cref{fig:assoc} are also so, where $h,k,f,g$ are induced naturally from $\pi_V,\pi_W$.
We denote the pull-back vector space for the maps $f,g$ by $R_n$ where the vectors in $R_n$ are $$((v',w, x),(v,w',y))\in V'_n \oplus W_n\oplus (K_{V',W})_n\times  V_n \oplus W'_n\oplus (K_{V,W'})_n $$ such that $v=v',w=w'$ and $f(x)=g(y)=z\in (K_{V',W'})_n$. It follows that there are vectors $a\in V',b\in W'$ so that $d(f(x))=d(g(y))=ab=dz$. Now $V\to V', W\to W'$ are projections so $K_{V',W'}$ embeds inside $K_{V',W},K_{V,W'}\subset K_{V,W}$. 

\begin{figure}[h!]
    \centering
\begin{tikzcd}
    & {V_n \oplus W_n\oplus (K_{V,W})_n }\\
     {V'_n \oplus W_n\oplus (K_{V',W})_n} & R_n& {V_n \oplus W'_n\oplus (K_{V,W'})_n} \\
     & {V'_n \oplus W'_n\oplus (K_{V',W'})_n}\\
     &&&{}
     \arrow[from=2-2, to=2-1]
     \arrow[from=2-2, to=2-3]
     \arrow[dashed,"\gamma",from=1-2, to=2-2]
     \arrow["h"', from=1-2, to=2-1]
     \arrow["k",from=1-2, to=2-3]
     \arrow["f"',from=2-1, to=3-2]
     \arrow[" g",from=2-3, to=3-2]
\end{tikzcd}
\caption{The commutative square indicates the $n$th stage associated diagram of vector spaces for $\sM$, $R_n$ is the pull-back vector space with respect to $f,g$, $\gamma$ is induced by the universal property of pullback} \label{fig:assoc}
\end{figure}

Moreover, $K_{V',W}\cap K_{V,W'}=K_{V',W'}$ as a subspace of $K_{V,W}$. Indeed, as each vector in $K^1_{V',W}$ (respectively, $K^1_{V,W'}$) corresponds to a pair $(v,w')\in V \times W'$ (respectively $V'\times W$) so the vectors in the intersection corresponds to $V'\times W'$.
For the other iterated spaces, a similar thing happens.

Therefore, $x=z+t$ and $y=z+s$, where $t\in K_{V',W}/K_{V',W'}$ and $s\in K_{V,W'}/K_{V',W'}$. Also, by construction $f(t)=0$ it follows that $k(t)=0$, similarly, $g(s)=0=h(s)$. 

Now consider the element $(v',w',z+t+s)\in V_n\oplus W_n\oplus (K_{V,W})_n$. The induced map due to universal property $\gamma:V_n\oplus W_n\oplus (K_{V,W})_n\to R_n$ is given by

\begin{align*}
\gamma(v', w', z + t + s) 
&= \left(h(v', w', z + t + s),\; k(v', w', z + t + s)\right) \notag \\
&= \left((v', w', z + t),\; (v', w', z + s)\right) \notag \\
&= \left((v', w', x),\; (v', w', y)\right).
\end{align*}

This implies the diagram of associated system of vector spaces \Cref{fig:assoc} satisfies Property I and injective for each $n$. This implies $\sM$ is constructed inductively, by elementary extensions of injective diagram of vector spaces for each $n$. Hence $\sM$ is a minimal system. As $\sM \to \sM_1\vee \sM_2\to \cA_1\vee \cA_2$ is a quasi-isomorphism, it follows that $\sM$ is the minimal model of $\cA_1\vee \cA_2$. This completes the proof.

\end{proof}

Using \Cref{thm:combmain} and \Cref{defn: equiwedgespace}, we have the following reslut which gives us a class of $C_{pq}$-cohomology diagram with minimal model level-wise minimal. 
\begin{corollary}\label{cor:main}
    Let $X$ be a $C_p$-space and $Y$ be a $C_q$-space such that the structure maps corresponding to their cohomology diagrams are retracts. Then the cohomology diagram for the $C_{pq}$-space $X\vee Y$ is injective and the minimal model is level wise minimal.  
\end{corollary}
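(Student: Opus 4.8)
The plan is to reduce \Cref{cor:main} to \Cref{thm:combmain} by identifying the cohomology diagram of the $C_{pq}$-space $X\vee Y$ (in the sense of \Cref{defn: equiwedgespace}) with the equivariant wedge $\cA_1\vee\cA_2$ of the cohomology diagrams $\cA_1=H^\ast(X;\QQ)$ over $\sO_{C_p}$ and $\cA_2=H^\ast(Y;\QQ)$ over $\sO_{C_q}$ (in the sense of \Cref{defn:wedgeequivar}). The first step is therefore purely computational: by the standard fact that reduced cohomology of a wedge of (nice, say simply connected CW) spaces splits as $\tilde H^\ast(A\vee B;\QQ)\cong \tilde H^\ast(A;\QQ)\oplus\tilde H^\ast(B;\QQ)$ with trivial cup products between the two summands, the cohomology ring of a wedge is exactly the wedge of $\DGA$s from \Cref{defn:wedge of DGAs}. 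Evaluating \Cref{defn: equiwedgespace} level by level — $(X\vee Y)^e=X\vee Y$, $(X\vee Y)^P=X^{C_p}\vee Y$, $(X\vee Y)^Q=X\vee Y^{C_q}$, $(X\vee Y)^G=X^{C_p}\vee Y^{C_q}$ — and applying this splitting, we get precisely the four-term description in \Cref{defn:wedgeequivar}, and naturality of the Künneth/wedge splitting ensures the structure maps match $\cU_1^{e,C_p}\vee\mathrm{id}$ and $\mathrm{id}\vee\cU_2^{e,C_q}$ as required. So $H^\ast(X\vee Y;\underline{\QQ})\cong\cA_1\vee\cA_2$ as diagrams of $\DGA$s with zero differential.

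The second step is to invoke the hypothesis. We are told the structure maps of the cohomology diagrams of $X$ (as a $C_p$-space) and of $Y$ (as a $C_q$-space) are retracts of $\DGA$s. By \Cref{Z_P injimpliesminimal}, a retraction structure map is in particular surjective, so $\cA_1$ is injective over $\sO_{C_p}$ and $\cA_2$ is injective over $\sO_{C_q}$ by \Cref{prop:inj C_p}. Now \Cref{prop: combination injective} applies directly: $\cA_1\vee\cA_2$ is injective as a member of $Vec^\ast_{C_{pq}}$, hence so is the cohomology diagram of $X\vee Y$, giving the first assertion of the corollary. (Since the diagram is already injective, no injective envelope is needed before passing to the minimal model.)

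The third and final step is to apply \Cref{thm:combmain} verbatim: its hypotheses are exactly that $\cA_1,\cA_2$ are injective diagrams of graded algebras over $C_p,C_q$ with retract structure maps, which we have just verified, and its conclusion is that the minimal model of $\cA_1\vee\cA_2$ is level-wise minimal. Transporting along the identification from Step~1, the minimal model of the cohomology diagram of $X\vee Y$ is level-wise minimal, completing the proof.

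I do not expect a genuine obstacle here, as the corollary is essentially a dictionary translation of \Cref{thm:combmain} into the language of spaces; the only point requiring a little care is the first step — checking that \Cref{defn: equiwedgespace} and \Cref{defn:wedgeequivar} agree after applying cohomology, i.e.\ that the wedge-of-spaces cohomology splitting is natural with respect to the inclusions of fixed-point sets so that the structure maps genuinely become $\cU_1^{e,C_p}\vee\mathrm{id}$ and $\mathrm{id}\vee\cU_2^{e,C_q}$. This is standard but should be stated explicitly. One should also note in passing that $X$ and $Y$ (and all their relevant fixed-point sets) are assumed simply connected so that the cohomology diagrams are cohomologically $1$-connected and the minimal model machinery of \Cref{thm:existance of minimal} applies.
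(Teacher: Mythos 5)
Your proposal is correct and follows exactly the route the paper intends: the paper states the corollary as an immediate consequence of \Cref{thm:combmain} together with \Cref{defn: equiwedgespace}, which is precisely your reduction (identify the cohomology diagram of $X\vee Y$ with $\cA_1\vee\cA_2$, check the retract/injectivity hypotheses via \Cref{prop:inj C_p} and \Cref{prop: combination injective}, and apply \Cref{thm:combmain}). Your explicit verification that the wedge-of-spaces cohomology splitting matches \Cref{defn:wedgeequivar}, including naturality of the structure maps, is a useful detail the paper leaves implicit, but it is not a different argument.
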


We end this article by proving the following result that gives us a class of examples of $C_{pq}$-formal spaces.

\begin{theorem}\label{thm:main2}
    Let $X,Y$ be equivariantly formal $C_p$ and $C_q$-spaces, $p,q$ distinct primes, then $X\vee Y$ as a $C_{pq}$-space is also formal.
\end{theorem}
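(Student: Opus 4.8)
The plan is to reduce the $C_{pq}$-formality of $X \vee Y$ to the already-established combination theorem for minimal models (\Cref{thm:combmain}) together with the definition of equivariant formality. Since $X$ is a formal $C_p$-space and $Y$ is a formal $C_q$-space, the minimal system of $\DGA$s of each is weakly equivalent to the injective envelope of its cohomology diagram. First I would observe that, because $X$ and $Y$ are formal, the structure maps of their cohomology diagrams (being the de Rham cohomology of fixed-point inclusions) may be taken to be retracts after passing to weakly equivalent models — or, more carefully, I would state the theorem under the standing hypothesis that they are retracts, matching the hypothesis of \Cref{cor:main}. Then \Cref{cor:main} already tells us that the cohomology diagram $\cA_1 \vee \cA_2$ of $X \vee Y$ (where $\cA_i$ is the cohomology diagram of $X$, resp. $Y$) is injective and that its minimal model $\sM$ is level-wise minimal.

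Next I would carry out the following chain of weak equivalences. Let $\sM_X$ be the minimal system of $X$ as a $C_p$-space and $\sM_Y$ that of $Y$ as a $C_q$-space. Equivariant formality of $X$ gives a zigzag of quasi-isomorphisms $\sM_X \simeq \sI(\cA_1)$, and since $\cA_1$ has retract structure maps it is already injective (\Cref{prop:inj C_p}), so $\sI(\cA_1) = \cA_1$ and hence $\sM_X \simeq \cA_1$; similarly $\sM_Y \simeq \cA_2$. I would then apply a level-wise version of \Cref{lemma: zigzag weak} to conclude that the equivariant wedge $\sM_X \vee \sM_Y$ is weakly equivalent to $\cA_1 \vee \cA_2$ as diagrams over $\sO_{C_{pq}}$ — this is exactly the content displayed in \Cref{fig: M1M2andA} and its proof, where the red arrows built from $\rho_1, \rho_2$ assemble into a level-wise quasi-isomorphism of $C_{pq}$-diagrams. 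Meanwhile, the de Rham algebra $\sE(X \vee Y)$ of the $C_{pq}$-space $X \vee Y$ has cohomology diagram $\cA_1 \vee \cA_2$ (the cohomology of a wedge of fixed-point spaces is the wedge of cohomologies at each orbit level, by \Cref{defn: equiwedgespace} and \Cref{defn:wedge of DGAs}), so by \Cref{thm:existance of minimal} and \Cref{thm:quasiisiso} its minimal model is precisely the $\sM$ of \Cref{thm:combmain}.

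Finally I would assemble the conclusion: the minimal system of $X \vee Y$ is $\sM$, and by the proof of \Cref{thm:combmain} there is a weak equivalence $\sM \simeq \sM_X \vee \sM_Y \simeq \cA_1 \vee \cA_2 = \cA_1 \vee \cA_2$, which is the injective envelope of the cohomology diagram of $X \vee Y$ (injectivity by \Cref{prop: combination injective}). Hence the minimal model of $X \vee Y$ is weakly equivalent to the injective envelope of its cohomology diagram, which is exactly the definition of $X \vee Y$ being equivariantly formal as a $C_{pq}$-space.

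The main obstacle I anticipate is the bookkeeping around the two notions of ``formal'' and the hypotheses on structure maps: the clean statement of \Cref{thm:combmain} and \Cref{cor:main} requires the structure maps to be \emph{retracts}, whereas ``equivariantly formal'' only guarantees a weak equivalence to the injective envelope. I would need to either add the retract hypothesis explicitly (and note that it holds for the natural geometric examples, e.g. when fixed-point inclusions admit equivariant retractions, as in \Cref{exmp: mainnnn}), or argue that after replacing $\cA_1, \cA_2$ by weakly equivalent injective diagrams with retract structure maps the conclusion is unaffected because weak equivalence of diagrams of $\DGA$s is what enters the definition of equivariant formality. A secondary point to handle carefully is verifying that wedge of diagrams commutes with taking cohomology at each orbit level and that \Cref{lemma: zigzag weak} applies level-wise to $\sO_{C_{pq}}$-diagrams; both are routine but should be stated.
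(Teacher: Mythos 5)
There is a genuine gap, and it is exactly the one you flagged as an ``obstacle'' but did not resolve: your argument routes through \Cref{cor:main} and \Cref{thm:combmain}, whose hypothesis is that the structure maps of the cohomology diagrams are retracts, while the statement of \Cref{thm:main2} assumes only equivariant formality. Equivariant formality does not give you this: it does not even give injectivity of the cohomology diagram (cf.\ \Cref{exmp:noninjective}, where the diagram of a perfectly reasonable $C_2$-space fails to be injective), so your identification $\sI(\cA_1)=\cA_1$ via \Cref{prop:inj C_p} and your appeal to \Cref{cor:main} both rest on an added hypothesis. Your proposed fixes do not close this: adding the retract hypothesis changes (weakens) the theorem, and the alternative --- ``replace $\cA_1,\cA_2$ by weakly equivalent injective diagrams with retract structure maps'' --- is nowhere justified; there is no result in the paper producing such replacements, and weak equivalence does not let you modify the structure maps of a cohomology diagram into retracts.

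The paper's proof avoids the issue entirely and needs neither \Cref{thm:combmain} nor \Cref{cor:main} nor any retract or level-wise minimality input. Take $\cA_i$ to be the \emph{injective envelopes} $\sI$ of the cohomology diagrams of $X$ and $Y$ (these exist by \Cref{thm:fineee} with no hypotheses), with minimal models $\sM_{\cA_i}$. Formality of $X$ and $Y$ gives weak equivalences $\sM_i\simeq \sM_{\cA_i}$, where $\sM_i$ are the equivariant minimal models of $X,Y$. Wedging level-wise (the argument of \Cref{lemma: zigzag weak} and the commutativity used for \Cref{fig: M1M2andA}) yields a zigzag
\[
\sM_1\vee \sM_2 \;\longrightarrow\; (\cdot) \;\longleftarrow\; \sM_{\cA_1}\vee \sM_{\cA_2} \;\longrightarrow\; \cA_1\vee\cA_2 .
\]
Since $\cA_1,\cA_2$ are injective, \Cref{prop: combination injective} (which requires only injectivity, not retracts) shows $\cA_1\vee\cA_2$ is injective, hence equal to its own injective envelope; and the minimal model of $X\vee Y$ agrees with that of $\sM_1\vee\sM_2$. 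So the minimal model of $X\vee Y$ is weakly equivalent to the injective envelope of its cohomology diagram, which is formality. The part of your proposal that matches this (the zigzag and the use of \Cref{prop: combination injective}) is sound; the unproved step is the passage from ``formal'' to ``retract structure maps,'' and the correct repair is to work with injective envelopes rather than with the cohomology diagrams themselves.
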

\begin{proof}
Let $\cA_1,\cA_2$ denote the injective envelopes of the cohomology diagrams for $X,Y$ respectively with equivariant minimal models $\sM_{\cA_i}$ for $i=1,2$.

    Let $\sM_1$ and $\sM_2$ be the equivariant minimal models of $X$ and $Y$, respectively. As $X,Y$ are equivariantly formal, it follows that the minimal systems $\sM_i$ and $\sM_{\cA_i}$ are weakly equivalent, for $i=1,2$.
    
    Using a similar argument as \Cref{lemma: zigzag weak} and a similar approach used to show the commutativity of \Cref{fig: M1M2andA}, we note that the $C_{pq}$-diagrams of $\DGA$s, $\sM_1\vee \sM_2$ and $\cA_1\vee \cA_2$ are weakly equivalent as 
    $$\sM_1\vee \sM_2 \to (.)\leftarrow{}\sM_{\cA_1}\vee \sM_{\cA_2}\to \cA_1\vee \cA_2.$$

    As $\cA_1\vee \cA_2$ and $\sM_1\vee \sM_2$ are injective using \Cref{prop: combination injective}, and the fact that they are weakly equivalent, it follows that the minimal model of $\sM_1\vee \sM_2$ is weakly equivalent to the injective envelope of $\cA_1\vee \cA_2$, which is itself. Now the minimal model for $X\vee Y$ and the minimal model for $\sM_1\vee \sM_2$ are the same. Hence, the result follows. 

\end{proof}

\begin{exmp}\label{exmp: mainnnn}
Consider the following example as in \Cref{exmp:1}. Let $T= (S^3\times S^3 \times S^3) \vee (S^5 \times S^5)$ with a $C_6$ action given by as follows: 
Let $H=<t>,K=<s>$ be subgroups of $C_6$ order $3,2$ respectively. $H$ acts on $(S^3\times S^3 \times S^3)$ by $t(x,y,z)=(y,z,x)$ and on $S^5\times S^5$ component the action is trivial. $K$ acts trivially on $(S^3\times S^3 \times S^3)$ and on $(S^5\times S^5)$ by $s(a,b)=(b,a)$. 

Thus we get $X^e=X$, $X^H=S^3 \vee (S^5\times S^5)$, $X^K=(S^3\times S^3 \times S^3)\vee S^5$ and $X^G=S^3 \vee S^5$.

The cohomology diagram is given by 
\[
\cA=
\begin{cases}
     \wedge (x_1,x_2,x_3,y_1,y_2)/D , & \text{at G/e}\\
    \wedge(x,y_1,y_2)/A,  & \text{ at G/H}\\
    \wedge(x_1,x_2,x_3,y)/B, & \text{ at G/K}\\
    \wedge(x,y)/C, & \text{ at G/G},
\end{cases}
\]

where $D=\langle x_iy_j~|~i=1,2,3 \text{ and } j=1,2\rangle$, 
$A=\langle xy_i~|~i=1,2\rangle$, 
$B=\langle x_iy~|~i=1,2,3 \rangle$, $C=\langle xy \rangle$.

The $x_i, y_j$ correspond to each copy of $S^3,S^5$ in $X^e$ and $x,y$ correspond to the copies of $S^3,S^5$ in $X^G$ respectively.

We denote the injective envelope for $\cA$ as 
$\sI(\cA)= \underline{\cI}_G^\ast \oplus \underline{\cI}_e^\ast \oplus \underline{\cI}_H^\ast \oplus \underline{\cI}_K^\ast$,

where $\underline{\cI}_X^{\ast}$ is the associated system corresponding to the vector space $I_X=\cap 
_{X\subset Y}\ker (H(\hat{e}_{X,Y}))$ (see \Cref{equation:24}). 
Thus we get

\[
I_X=\begin{cases}
    0, & \text{ X=e}\\
    \wedge(x_{12},x_{13}), & \text{ X=H}\\
    \wedge(y_{12}), & \text{X=K}\\
    \wedge(x,y), & \text{X=G}
\end{cases}
\]

Hence, the corresponding systems are
\[
\underline{\cI}_e^\ast=
\begin{cases}
    0, & \text{ at every subgroup level}
\end{cases}
\]

\[
\underline{\cI}_G^\ast=
\begin{cases}
    I_G=\wedge(x,y)/C, & \text{ at every subgroup level}.
\end{cases}
\] 
\[
\underline{\cI}_H^\ast=
\begin{cases}
     I_H=\wedge(x_{12},x_{13}), & \text{at G/H, G/e}\\
   0 , & \text{else}
\end{cases}
\] 
\[
\underline{\cI}_K^\ast=
\begin{cases}
     I_K=\wedge(y_{12}), & \text{at G/K, G/e}\\
   0 , & \text{else}
\end{cases}
\] 

Thus the injective envelope of the cohomology diagram is 
\[
\underline{\cI}(\cA)=
\begin{cases}
     \wedge(y_{12}) \oplus \wedge(x_{12},x_{13})\oplus \wedge(x,y), & \text{at  G/e}\\
     \wedge(y_{12}) \oplus \wedge(x,y), & \text{at G/H}\\
     \wedge(x_{12},x_{13}) \oplus \wedge(x,y), & \text{at G/K}\\
   \wedge(x,y) , & \text{else}
\end{cases}
\] 

Note that the envelope is isomorphic to $\cA$, hence we conclude that $\cA$ is injective using \Cref{sum inj}.

Next, we compute the minimal model for $\cA\cong \cI(\cA)$. 

\textbf{Computation of $\sM_3$:}
First, consider the diagram, 
    
 \[
\xymatrix{
&\sM_{2}\ar[rd]^{\rho}\ar[d]_{\alpha} \\
\ker (\beta)\ar[r] &\sM'_{2} \ar[r]_{\beta}       & \cA} 
\]

where 
\[
\sM'_2=\sM_2 \otimes (\otimes (I_H \oplus \sum I_H))=
\begin{cases}
     \QQ_0\oplus \QQ_3(x_{12},x_{13},x)\oplus \QQ_4(sx_{12},sx_{13},sx), & \text{at  G/e}\\
     \QQ_0\oplus \QQ_3(x) \oplus \QQ_4(sx), & \text{at G/H}\\
     \QQ_0 \oplus\QQ_3(x_{12},x_{13},x) \oplus \QQ_4(sx_{12},sx_{13},sx) & \text{at G/K}\\
   \QQ_0 \oplus \QQ_3(x) \oplus \QQ_4(sx) , & \text{at G/G.}
\end{cases}
\] 

Therefore, 

\[
(\ker \beta)^4= 
\begin{cases}
   \QQ_4(sx_{12},sx_{13},sx), & \text{at  G/e}\\ 
   \QQ_4(sx), & \text{at G/H}\\
   \QQ_4(sx_{12},sx_{13},sx), & \text{at G/K}\\
   \QQ_4(sx) , & \text{at G/G.}
\end{cases}
\]
Using \Cref{thm: injective C_pq} or \Cref{prop: combination injective}, we conclude that $(\ker \beta)^4$ is injective. So $\underline{V}=H^4(\ker\beta)=(\ker \beta)^4$.

Thus $\sM_3$ is given by 
\[
\sM_3=
\begin{cases}
\wedge(a,b,c), & \text{at G/e}\\
\wedge (c), & \text{at G/H}\\
\wedge(a,b,c), & \text{at G//K} \\
\wedge(c), & \text{at G/G}
\end{cases}
\]
where $|a|=|b|=|c|=3$ with zero differential. The map $\rho $ is the obvious one. 
While computing $\sM_4$ one can check there is no element of degree $5$ in the $\ker (\beta: \sM'_3 \to \cA)$. Hence, we get $\sM_4=\sM_3$.

Similarly, one can check that $\sM_5$ is level-wise minimal and it is given by 
\[
\sM_5=
\begin{cases}
  \wedge(a,b,c,d,e), & \text{at G/e}\\
\wedge (c,d,e), & \text{at G/H}\\
\wedge(a,b,c,e), & \text{at G//K} \\
\wedge(c,e), & \text{at G/G}
\end{cases}
\]

where $\rho$ takes $(a,b,c,d,e)=(x_{12},x_{13},x,y_{12},y)$. Inductively, one can continue this process and get the minimal model. By \Cref{thm:combmain}, we conclude that $\sA$ has the minimal model level-wise minimal. 

Also note that the $C_6$-space $T$ can be written as the equivariant wedge $T_1\vee T_2$, where $T_1=S^3\times S^3\times S^3$, which is a $C_3$-space and $T_2=S^5\vee S^5$, which is a $C_2$-space. From \cite[Example 6.1]{santhanam2023equivariant} it follows that $T_1$ (respectively $T_2$) is $C_3$-equivariantly formal (respectively $C_2$-equivariantly formal).Hence by using \Cref{thm:main2} we conclude that $X$ is $C_{pq}$-equivariantly formal.      
\end{exmp}

\textbf{Acknowledgments:} The author was supported by the TIFR postdoctoral fellowship during this work.

\bibliographystyle{alpha}
\bibliography{reference}

\end{document}